\newtheorem{theorem}{Theorem}[section]
\newtheorem{lemma}[theorem]{Lemma}
\newtheorem{proposition}[theorem]{Proposition}
\theoremstyle{definition}
\newtheorem{remark}[theorem]{Remark}
\theoremstyle{remark}
\numberwithin{equation}{section}
\newcounter{smallromans}
\newenvironment{romanenumerate}
{\begin{list}{{\normalfont\textrm{(\roman{smallromans})}}}%
    {\usecounter{smallromans}\setlength{\itemindent}{0cm}%
      \setlength{\leftmargin}{5.5ex}\setlength{\labelwidth}{5.5ex}%
      \setlength{\topsep}{0.75\parsep}\setlength{\partopsep}{0ex}%
      \setlength{\itemsep}{0ex}}}%
  {\end{list}}
\newcommand{\romanref}[1]{{\normalfont\textrm{(\ref{#1})}}}
\newcounter{smallalphs}
\newenvironment{alphenumerate}
{\begin{list}{{\normalfont\textrm{(\alph{smallalphs})}}}%
    {\usecounter{smallalphs}\setlength{\itemindent}{0cm}%
      \setlength{\leftmargin}{5.5ex}\setlength{\labelwidth}{5.5ex}%
      \setlength{\topsep}{0.75\parsep}\setlength{\partopsep}{0ex}%
      \setlength{\itemsep}{0ex}}}%
  {\end{list}}
\newcommand{\alphref}[1]{{\normalfont\textrm{(\ref{#1})}}}
\newcommand{\supp}{\operatorname{supp}}
\newcommand{\lin}{\operatorname{lin}}
\newcommand{\cllin}{\overline{\operatorname{lin}}\,}
\newcommand{\Sz}{\operatorname{Sz}}
\renewcommand{\leq}{\ensuremath{\leqslant}}
\renewcommand{\le}{\ensuremath{\leqslant}}
\renewcommand{\ge}{\ensuremath{\geqslant}}
\renewcommand{\phi}{\ensuremath{\varphi}}
\renewcommand{\epsilon}{\ensuremath{\varepsilon}}
\newcommand{\smashw}[2][l]{{\text{\makebox[0pt][#1]{$#2$}}}}
\begin{document}

\title[Uniqueness of the maximal ideal of
$\mathcal{B}(C([0,\omega_1\text{]}))$]{%
  \boldmath%
  Uniqueness of the maximal ideal of the Banach algebra of bounded
  operators on $C([0,\omega_1])$%
  \unboldmath}
\subjclass[2010]%
{Primary 47L10, 46H10; Secondary 47L20, 46B26, 47B38}

\author{Tomasz Kania and Niels Jakob Laustsen}
\address{Department of Mathematics and Statistics, Fylde College,
  Lancaster University, Lancaster LA1 4YF, United Kingdom}
\email{t.kania@lancaster.ac.uk, n.laustsen@lancaster.ac.uk}
\keywords{Continuous functions on ordinals, bounded operators on
  Banach spaces, maximal ideal, Loy--Willis ideal}

\begin{abstract}
  Let $\omega_1$ be the first uncountable ordinal. By a result of
  Rudin, bounded operators on the Banach space~$C([0,\omega_1])$ have
  a natural re\-presen\-ta\-tion as \mbox{$[0,\omega_1]\times
    [0,\omega_1]$}-matrices. Loy and Willis observed that the set of
  opera\-tors whose final column is continuous when viewed as a
  scalar-valued function on $[0,\omega_1]$ defines a maximal ideal of
  co\-dimen\-sion one in the Banach alge\-bra
  $\mathscr{B}(C([0,\omega_1]))$ of bounded operators on
  $C([0,\omega_1])$.  We give a co\-ordinate-free characteri\-za\-tion
  of this ideal and deduce from it that $\mathscr{B}(C([0,\omega_1]))$
  contains no other maximal ideals. We then obtain a list of
  equivalent conditions describing the strictly smaller ideal of
  operators with separable range, and finally we investigate the
  structure of the lattice of all closed ideals of
  $\mathscr{B}(C([0,\omega_1]))$.
\end{abstract}
\maketitle

\section{Introduction}
\noindent
Loy and Willis~\cite{LW} proved that every derivation from the Banach
algebra~$\mathscr{B}(C([0,\omega_1]))$ of (bounded) opera\-tors on the
Banach space of continuous functions on the ordinal
inter\-val~$[0,\omega_1]$ equipped with its order topology into a
Banach $\mathscr{B}(C([0,\omega_1]))$-bimodule is auto\-matically
continuous. At the heart of their proof is the observation that the
set~$\mathscr{M}$ consisting of those operators whose final column is
continuous at~$\omega_1$ is a maximal ideal of codimension one in
$\mathscr{B}(C([0,\omega_1]))$. We call~$\mathscr{M}$ the
\emph{Loy--Willis ideal}. Its precise definition will be given in
Section~\ref{section2}, once we have introduced the necessary
terminology.

Motivated by the desire to understand the lattice of closed ideals of
$\mathscr{B}(C([0,\omega_1]))$, we shall prove the following result.
\begin{theorem}\label{uniquenessthm}
  The Loy--Willis ideal is the unique maximal ideal of
  $\mathscr{B}(C([0,\omega_1]))$.
\end{theorem}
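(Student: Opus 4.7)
The plan is to establish a coordinate-free characterisation of $\mathscr{M}$ and to deduce uniqueness from it, following the strategy that has proved successful for other Banach spaces whose operator algebra admits a unique maximal ideal.

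The candidate characterisation I would aim for is that $\mathscr{M}$ consists precisely of those operators $T$ through which the identity operator on $C([0,\omega_1])$ does \emph{not} factor, i.e.\ for which there exist no $A, B \in \mathscr{B}(C([0,\omega_1]))$ with $ATB = I$. Write $\mathscr{F}$ for this set. One inclusion is immediate: since $\mathscr{M}$ has codimension one and is proper, $I \notin \mathscr{M}$; hence if $T \in \mathscr{M}$ and $ATB = I$ then $I$ would lie in the two-sided ideal $\mathscr{M}$, a contradiction. Thus $\mathscr{M} \subseteq \mathscr{F}$.

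The substantive content is the reverse inclusion. Given $T \notin \mathscr{M}$, the hypothesis that the final column $\alpha \mapsto T_{\alpha,\omega_1}$ of $T$ is discontinuous at $\omega_1$ in Rudin's matrix representation supplies a scalar $\lambda$, a constant $\varepsilon > 0$, and a strictly increasing transfinite sequence $\alpha_\xi \nearrow \omega_1$ along which $|T_{\alpha_\xi,\omega_1} - \lambda| \geq \varepsilon$ for every $\xi < \omega_1$. From this data I would construct $A, B \in \mathscr{B}(C([0,\omega_1]))$ realising the factorisation $ATB = I$, by choosing inductively pairwise disjoint clopen intervals of the form $(\beta_\xi,\alpha_\xi]$ in $[0,\omega_1)$, using their characteristic functions together with the constant function $\mathbf{1}$ to embed a copy of $C([0,\omega_1])$ via $B$, and defining $A$ so that its composition with $T$ inverts $B$ coordinate-by-coordinate, the required invertibility at each coordinate being guaranteed by the uniform lower bound $\varepsilon$. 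The main technical obstacle is precisely here: one must arrange that the composition $ATB$ equals the identity on \emph{all} of $C([0,\omega_1])$, not merely on the spanning vectors, which amounts to controlling the off-diagonal contributions of $T$ at the limit ordinal and exploiting the uncountable cofinality of $\omega_1$ to thin out the sequence $(\alpha_\xi)$ so that these contributions become manageable.

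With $\mathscr{M} = \mathscr{F}$ established, the uniqueness of the maximal ideal is a short deduction: let $\mathscr{J}$ be any maximal ideal of $\mathscr{B}(C([0,\omega_1]))$ distinct from $\mathscr{M}$. Pick $T \in \mathscr{J} \setminus \mathscr{M}$; by the characterisation some $A, B$ satisfy $ATB = I$, so $I \in \mathscr{J}$, contradicting properness of $\mathscr{J}$. Hence every maximal ideal is contained in $\mathscr{M}$, and since $\mathscr{M}$ is itself maximal and proper, it is the unique maximal ideal of $\mathscr{B}(C([0,\omega_1]))$.
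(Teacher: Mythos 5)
Your overall strategy---identify $\mathscr{M}$ with the set of operators through which the identity does not factor, then deduce uniqueness---is exactly the paper's, and both the easy inclusion and the final deduction are correct. The gap is the reverse inclusion, which is the entire substance of the theorem and which your sketch does not deliver. First, the data you extract from $T\notin\mathscr{M}$ is misread: by Proposition~2.1(iv) (Loy--Willis), the final column $k^T_{\omega_1}$ is continuous on $[0,\omega_1)$ and has a limit at $\omega_1$, so ``discontinuity at $\omega_1$'' means only that the single scalar $c = T_{\omega_1,\omega_1}-\lim_{\alpha\to\omega_1}T_{\alpha,\omega_1}$ is non-zero; there is no further oscillation in the column to exploit. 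Worse, in Rudin's representation the entries $T_{\alpha,\omega_1}$ are the coefficients multiplying $f(\omega_1)$, so they contribute nothing when $T$ is applied to your test functions $\mathbf{1}_{(\beta_\xi,\alpha_\xi]}$, which vanish at $\omega_1$. Hence the ``uniform lower bound $\varepsilon$'' cannot guarantee any coordinatewise invertibility of $ATB$ on the embedded copy: the non-vanishing of $c$ only says that $T$ couples the point $\omega_1$ to itself, and says nothing about how $T$ acts on functions supported in $[0,\omega_1)$, which is where all the work lies.

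Second, the step you label the main technical obstacle---forcing $ATB=I$ on all of $C([0,\omega_1])$ by ``thinning out the sequence''---is precisely what the paper has to build substantial machinery for, and uncountable cofinality alone does not do it. The paper's route is: multiply by $I-P_\rho$ so that every column with countable index vanishes eventually; subtract a rank-one operator and rescale so that the final row and column both become $\mathbf{1}_{\{\omega_1\}}$ (this is where $c\neq 0$ is used); run an interleaved transfinite closing-off induction producing ordinals $\eta_\sigma$ and $\xi_\sigma$ that control the supports of rows and columns simultaneously; compose on the right with the projection $\Phi_\Xi$ of Lemma~3.2 and on the left with the composition operator $\Psi_H$ of Lemma~3.3 for a closed uncountable set $H$, after which the resulting operator maps $\mathbf{1}_{[0,\mu_\sigma]}$ to $\mathbf{1}_{[0,\nu_\sigma]}$ for strictly increasing transfinite sequences, so that the isometries $U_M$, $U_N$ and the projection $\Phi_N$ yield a factorization of the identity. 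Even then the factorization is obtained only modulo a finite-rank perturbation, and one needs the separate Fredholm argument of Lemma~3.1, together with the fact that $C([0,\omega_1])$ is isomorphic to its finite-codimensional subspaces, to conclude. None of these steps is routine, and none appears, even implicitly, in your sketch; as it stands the proposal establishes only the trivial inclusion and the formal deduction of uniqueness.
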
 

This result is in fact an immediate consequence of a more general
theorem, which is of in\-dependent interest because it gives a
coordinate-free characterization of the Loy--Willis ideal. (By
`coordinate-free', we mean without reference to the matrix
representation of operators.)

\begin{theorem}\label{coorfreecharofLWideal}
  An operator~$T$ on~$C([0,\omega_1])$ belongs to the Loy--Willis
  ideal~$\mathscr{M}$ if and only if the identity operator
  on~$C([0,\omega_1])$ does not factor through~$T$ in the sense that
  there are no operators~$R$ and~$S$ on~$C([0,\omega_1])$ such that $I
  = STR$.
\end{theorem}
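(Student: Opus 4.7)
The plan is to establish the two implications separately.

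The \emph{necessity} is almost immediate: if $T \in \mathscr{M}$ and $I = STR$ for some $R, S \in \mathscr{B}(C([0,\omega_1]))$, then $I$ would itself lie in $\mathscr{M}$ because $\mathscr{M}$ is a two-sided ideal, contradicting the fact that $\mathscr{M}$ is proper (it has codimension one in $\mathscr{B}(C([0,\omega_1]))$). Hence the desired factorization cannot exist when $T \in \mathscr{M}$.

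For the \emph{sufficiency}, suppose $T \notin \mathscr{M}$; the goal is to produce operators $R$ and $S$ on $C([0,\omega_1])$ with $STR = I$. The standard route is to construct a closed subspace $X$ of $C([0,\omega_1])$ enjoying three properties: (i)~$X$ is isomorphic to $C([0,\omega_1])$; (ii)~the restriction $T|_X$ is bounded below; and (iii)~$T(X)$ is complemented in $C([0,\omega_1])$. Granted such an $X$, with isomorphism $U \colon C([0,\omega_1]) \to X$ and bounded projection $P$ from $C([0,\omega_1])$ onto $T(X)$, one sets $R := U$ (regarded as an operator on $C([0,\omega_1])$) and $S := U^{-1} \circ (T|_X)^{-1} \circ P$; then $STR = I$.

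The crux of the proof is therefore the construction of $X$. Since $T \notin \mathscr{M}$, the final column of the matrix representation of $T$ (to be introduced in Section~\ref{section2}) fails to be continuous at $\omega_1$. I would exploit this to extract a club subset $C \subseteq [0,\omega_1)$ and a $\delta > 0$ along which $T$ exhibits a uniform lower bound on a family of ``block'' functions subordinated to consecutive pairs in $C$. The closed linear span of such blocks should furnish an isomorphic copy of $C([0,\omega_1])$ by means of suitable decomposition identities for $C([0,\omega_1])$, and a compatible projection onto $T(X)$ should emerge from the same block structure.

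I expect the principal obstacle to lie in coordinating these steps: extracting the club $C$, establishing the uniform lower bound, and producing a bounded projection onto $T(X)$ simultaneously. The complementation of $T(X)$ is particularly delicate, since it requires a bounded projection respecting the $T$-image of the block structure in the codomain rather than in the domain; this appears to call for a transfinite recursion along $C$ that combines structural properties of $C([0,\omega_1])$ with the robust behaviour of the club filter on $\omega_1$.
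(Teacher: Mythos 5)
Your necessity argument is exactly the paper's and is fine, and your reduction of the sufficiency to finding a closed subspace $X\cong C([0,\omega_1])$ on which $T$ is bounded below with $T(X)$ complemented is a correct reformulation of ``$I$ factors through $T$''. But everything after that is a hope, not a proof, and the specific mechanism you propose is unlikely to deliver it. The hypothesis $T\notin\mathscr{M}$ only says that the single entry $T_{\omega_1,\omega_1}$ differs from $\lim_{\alpha\to\omega_1}T_{\alpha,\omega_1}$; you assert that this yields a club $C$ and a $\delta>0$ with a uniform lower bound for $T$ on disjointly supported ``blocks'', but you give no argument, and even granting it, disjointly supported blocks with a uniform lower bound produce (at best) a copy of $c_0(\omega_1)$ (this is precisely Lemma~\ref{lemma28june1}, used in the proof of Theorem~\ref{thmseprange}), not a copy of $C([0,\omega_1])$; and while $c_0(\omega_1)$-copies are automatically complemented (Proposition~\ref{argyrosetalProp3.2}), copies of $C([0,\omega_1])$ are not, so the complementation of $T(X)$ cannot simply ``emerge from the block structure''. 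To obtain $C([0,\omega_1])$ you need the images to behave like the increasing family of indicators $\mathbf{1}_{[0,\nu_\sigma]}$, whose closed span is complemented only because of the explicit projection $\Phi_N$ of Lemma~\ref{thePhiLemma}\romanref{thePhiLemma3}; nothing in your outline produces that structure.

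The paper's proof shows what is actually needed and why a quantitative lower bound is the wrong target. It first right-multiplies by $I-P_\rho$ so that all columns of countable index vanish eventually, then subtracts the rank-one operator $g\otimes\epsilon_{\omega_1}$ and rescales so that the final row and column become $\mathbf{1}_{\{\omega_1\}}$; next it builds two interleaved transfinite sequences $(\eta_\sigma)$ and $(\xi_\sigma)$ of countable ordinals and composes with the projection $\Phi_\Xi$ and the composition operator $\Psi_H$ (Lemmas~\ref{thePhiLemma} and~\ref{anothercompop}) to force every row indexed by a closed uncountable set to have one-point support. The key step replacing your ``uniform $\delta$'' is qualitative: the resulting operator sends $\mathbf{1}_{[0,\omega_1]}$ to a continuous, hence eventually constant, function whose value at $\omega_1$ is $1$, so beyond a countable stage the surviving matrix entries are exactly $1$; this yields $T_5(\mathbf{1}_{[0,\mu_\sigma]})=\mathbf{1}_{[0,\nu_\sigma]}$ for uncountably many $\sigma$, whence $V_NT_5U_M=I$ with $U_M,U_N$ the isometries of Lemma~\ref{thePhiLemma}\romanref{thePhiLemma1}. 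Finally, since all of this was done only up to finite-rank perturbations of $T$, the factorization obtained is $STR+F=I$, and Lemma~\ref{fredholm} (a Fredholm argument, using that $C([0,\omega_1])$ is isomorphic to its hyperplanes) removes $F$. Your proposal contains no substitute for either of these two essential ingredients --- the matrix surgery that converts a single bad corner entry into identity-like behaviour on uncountably many columns, and the device for absorbing the finite-rank corrections --- so as it stands the sufficiency direction is not proved.
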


The implication $\Rightarrow$ is obvious because the
ideal~$\mathscr{M}$ is proper.  The converse is much harder to prove;
this will be the topic of Section~\ref{sectionpfofmainthm}. Once it
has been proved, however, Theorem~\ref{uniquenessthm} is immediate
because Theorem~\ref{coorfreecharofLWideal} implies that the identity
operator belongs to the ideal generated by any operator not
in~$\mathscr{M}$.

Many Banach spaces~$X$ share with $C([0,\omega_1])$ the property that
the set
\[ \mathscr{M}_X = \{ T\in\mathscr{B}(X) : \text{the identity operator
  on}\ X\ \text{does not factor through}\ T\} \] is the unique maximal
ideal of~$\mathscr{B}(X)$. As noted in~\cite{dj}, the only non-trivial
part of this statement is that $\mathscr{M}_X$ is closed under
addition, and as in Theorem~\ref{coorfreecharofLWideal}, this is often
verified by showing that $\mathscr{M}_X$ is equal to some known ideal
of $\mathscr{B}(X)$. 

Banach spaces~$X$ for which $\mathscr{M}_X$ is the unique maximal
ideal of $\mathscr{B}(X)$ include:
\begin{romanenumerate}
\item $X = \ell_p$ for $1\le p <\infty$ and $X = c_0$
  (see~\cite{gmf});
\item $X = L_p([0,1])$ for $1\le p<\infty$ (see
  \cite[Theorem~1.3]{djs} and the text following it);
\item $X =\ell_\infty\cong L_\infty([0,1])$ (use
  \cite[Proposition~2.f.4]{lt}, as explained in~\cite[p.~253]{ll});
\item $X = \bigl(\bigoplus \ell_2^n\bigr)_{c_0}$ and $X =
  \bigl(\bigoplus \ell_2^n\bigr)_{\ell_1}$ (see \cite{llr} and
  \cite[Corollary~2.12]{lsz});
\item $X = C([0,1])$ (use \cite[Theorem~1]{pel} and
  \cite[Theorem~1]{ros}, as in \cite[Example~3.5]{brooker});
\item $X = C([0,\omega^\omega])$ and $X = C([0,\omega^\alpha])$, where
  $\alpha$ is a countable epsilon number, that is, a countable ordinal
  satisfying $\alpha = \omega^\alpha$. This result is due to Philip
  A.\ H.\ Brooker (unpublished), who has kindly given us permission to
  include it here together with the following proof. Let \mbox{$X =
    C([0,\omega^{\omega^\alpha}])$}, where $\alpha$ is either~$1$ or a
  countable epsilon number. The
  set~$\mathscr{S}\!\mathscr{Z}_\alpha(X)$ of operators on~$X$ having
  Szlenk index at most $\omega^\alpha$ is an ideal of $\mathscr{B}(X)$
  by \cite[Theorem~2.2]{brooker}. We shall discuss this ideal in more
  detail in Section~\ref{section5ideallattice}; for now, it suffices
  to note that $\mathscr{S}\!\mathscr{Z}_\alpha(X)\subseteq
  \mathscr{M}_X$ because the identity operator on~$X$ has Szlenk
  index~$\omega^{\alpha+1}$ (see Theorem~\ref{SzlenkCK}%
  \romanref{SzlenkCK1} below). Conversely, Bourgain
  \cite[Proposition~3]{bo} has shown that each
  operator~$T\notin\mathscr{S}\!\mathscr{Z}_\alpha(X)$ fixes a copy
  of~$X$. Hence, using \cite[Theorem~1]{pel} as above, we see that the
  identity operator on~$X$ factors through~$T$, so
  $\mathscr{M}_X\subseteq\mathscr{S}\!\mathscr{Z}_\alpha(X)$, and the
  conclusion follows.
\end{romanenumerate}
Note that, by~\cite{se}, $C([0,\omega_1])$ differs from all of the
above-mentioned Banach spaces by not being isomorphic to its Cartesian
square $C([0,\omega_1])\oplus C([0,\omega_1])$.  \medskip

Having thus understood the maximal ideal(s)
of~$\mathscr{B}(C([0,\omega_1]))$, we turn our atten\-tion to the
other closed ideals of this Banach algebra.  We begin with a
characterization of the ideal~$\mathscr{X}(C([0,\omega_1]))$ of
operators with separable range. To state it, we require three pieces
of notation.

Firstly, we associate with each countable ordinal~$\sigma$ the
multiplication opera\-tor $P_\sigma$ given by $P_\sigma f=
f\cdot\mathbf{1}_{[0,\sigma]}$ for $f\in C([0,\omega_1])$.  Since the
indicator function $\mathbf{1}_{[0,\sigma]}$ is idempotent and
continuous with norm one, $P_\sigma$ is a contractive projection
on~$C([0,\omega_1])$, and its range is isometrically isomorphic
to~$C([0,\sigma])$. For technical reasons (notably
Theorem~\ref{thmseprange}\alphref{thmseprange2e} below), we also
require the rank-one perturbation
\begin{equation}\label{defnPtilde}
  \widetilde{P}_\sigma = P_\sigma +
  \mathbf{1}_{[\sigma+1,\omega_1]}\otimes \epsilon_{\omega_1}
\end{equation}
of~$P_\sigma$, where $\epsilon_{\omega_1}\in C([0,\omega_1])^*$
denotes the point evaluation at~$\omega_1$.  Clearly
$\widetilde{P}_\sigma$ is a contractive projection.

Secondly, for Banach spaces $X$, $Y$ and $Z$, we let
\begin{equation}\label{defn_factorizationideal}  \mathscr{G}_Z(X,Y) =
  \lin\{ TS : S\in\mathscr{B}(X,Z),\,
  T\in\mathscr{B}(Z,Y)\}. \end{equation}  
This defines an operator ideal in the sense
of Pietsch, the \emph{ideal of operators factoring
  through}~$Z$. Note that if $Z$ contains a complemented copy of its
square $Z\oplus Z$, then the set $\{
TS : S\in\mathscr{B}(X,Z),\, T\in\mathscr{B}(Z,Y)\}$ is already closed
under addition, so the `$\lin$' in~\eqref{defn_factorizationideal} 
is super\-fluous. We write $\overline{\mathscr{G}}_Z(X,Y)$ for the norm
closure of $\mathscr{G}_Z(X,Y)$; this is a closed operator ideal.

Thirdly, we denote by~$c_0(\omega_1)$ the Banach space of
scalar-valued functions~$f$ defined on~$\omega_1=[0,\omega_1)$ such
that the set $\{\alpha\in[0,\omega_1) : |f(\alpha)|\ge\epsilon\}$ is
finite for each $\epsilon>0$, equipped with the pointwise-defined
vector-space operations and the supremum norm.

We can now state our characterization of the operators on
$C([0,\omega_1])$ with separable range. Its proof will be given in
Section~\ref{secion4}.

\begin{theorem}\label{thmseprange}
  The following five conditions are equivalent for an operator $T$ on
  $C([0,\omega_1])\colon$
  \begin{alphenumerate}
  \item\label{thmseprange2e} $T = \widetilde{P}_\sigma
    T\widetilde{P}_\sigma$ for some countable ordinal~$\sigma;$
  \item\label{thmseprange2g}
    $T\in\mathscr{G}_{C([0,\sigma])}(C([0,\omega_1]))$ for some
    countable ordinal~$\sigma;$
  \item\label{thmseprange2f}
    $T\in\overline{\mathscr{G}}_{C([0,\sigma])}(C([0,\omega_1]))$ for
    some countable ordinal~$\sigma;$
  \item\label{thmseprange2a} $T\in\mathscr{X}(C([0,\omega_1]));$
  \item\label{thmseprange2d} $T$ does not fix a copy
    of~$c_0(\omega_1)$.
  \end{alphenumerate}
\end{theorem}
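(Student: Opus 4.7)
The plan is to establish the equivalences via the cycle (a)$\Rightarrow$(b)$\Rightarrow$(c)$\Rightarrow$(d)$\Rightarrow$(e)$\Rightarrow$(a); only the final implication carries any real content.

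For (a)$\Rightarrow$(b), I would observe that the range of $\widetilde{P}_\sigma$ consists of those continuous functions on $[0,\omega_1]$ that are constant on the tail $[\sigma+1,\omega_1]$, and since $\sigma+1$ is a successor ordinal isolated in $[0,\omega_1]$ from the initial segment $[0,\sigma]$, this range is isometrically isomorphic to $C([0,\sigma])\oplus\mathbb{K}$, hence to $C([0,\sigma+1])$; the identity $T=\widetilde{P}_\sigma T\widetilde{P}_\sigma$ then factors $T$ through this range, placing it in $\mathscr{G}_{C([0,\sigma+1])}(C([0,\omega_1]))$. The implication (b)$\Rightarrow$(c) is immediate from the definition; (c)$\Rightarrow$(d) follows because each $C([0,\sigma])$ with $\sigma$ countable is separable, so every finite sum of operators factoring through such spaces has separable range, and the class of operators with separable range is norm closed; and (d)$\Rightarrow$(e) is immediate, since $c_0(\omega_1)$ has density character $\aleph_1$ and cannot embed as a closed subspace of a separable Banach space.

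The main obstacle is (e)$\Rightarrow$(a), which I would prove contrapositively: assuming $T\neq\widetilde{P}_\sigma T\widetilde{P}_\sigma$ for every countable $\sigma$, build an isomorphic copy of $c_0(\omega_1)$ in $C([0,\omega_1])$ on which $T$ is bounded below. The strategy is a transfinite recursion of length $\omega_1$ producing simultaneously an increasing sequence $(\sigma_\alpha)_{\alpha<\omega_1}$ of countable ordinals, a corresponding family of unit vectors $(f_\alpha)_{\alpha<\omega_1}$ in $C([0,\omega_1])$ with $\supp f_\alpha\subseteq(\sigma_\alpha,\sigma_{\alpha+1}]$, and an auxiliary sequence of ordinals $(\tau_\alpha)_{\alpha<\omega_1}$ witnessing an almost-disjointness condition on the images $Tf_\alpha$ via the projections $\widetilde{P}_{\tau_\alpha}$. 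The disjoint supports of the $f_\alpha$ render their closed linear span isometric to $c_0(\omega_1)$, while a uniform positive lower bound on the norms $\|Tf_\alpha\|$ combined with the disjointness of their essential supports shows that $T$ restricted to this span is an isomorphism, so $T$ fixes a copy of $c_0(\omega_1)$.

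The critical step is the successor stage: given the data through stage $\alpha$, choose $f_\alpha$ supported past $\sigma_\alpha$ (hence disjoint from the earlier $f_\beta$) with $Tf_\alpha$ of norm bounded below and falling outside the range of $\widetilde{P}_{\tau_\alpha}$, where $\tau_\alpha$ is taken large enough to encompass the earlier images. The failure of $T=\widetilde{P}_{\sigma_\alpha}T\widetilde{P}_{\sigma_\alpha}$ supplies a vector witnessing exactly this, but the technical difficulty is arranging a single positive lower bound on $\|Tf_\alpha\|$ uniformly across all $\omega_1$ stages; I expect this to be handled by a pigeonhole argument extracting an uncountable sub-family on which the witnessing norms stay bounded below by a single positive constant. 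Limit stages are routine, setting $\sigma_\lambda=\sup_{\alpha<\lambda}\sigma_\alpha$, which remains countable.
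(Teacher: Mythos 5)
Your chain of easy implications is fine (factoring through $C([0,\sigma+1])$ rather than $C([0,\sigma])$ is harmless, since the statement only asks for some countable ordinal), but the contrapositive of (e)$\Rightarrow$(a) as you describe it has two genuine gaps, one of which is fatal as stated. First, a repairable one: the failure of $T=\widetilde{P}_{\sigma_\alpha}T\widetilde{P}_{\sigma_\alpha}$ at the single ordinal $\sigma_\alpha$ does \emph{not} by itself supply a vector supported in $(\sigma_\alpha,\omega_1)$ with non-zero image, let alone one whose image has mass beyond a prescribed $\tau_\alpha$; the inequality could be witnessed entirely on the "head" side. To get such vectors one must use the hypothesis for \emph{all} countable $\sigma$ together with a separability argument: if $(I-\widetilde{P}_{\tau})T(I-\widetilde{P}_{\xi})=0$ then $T=\widetilde{P}_{\tau}T+(I-\widetilde{P}_{\tau})T\widetilde{P}_{\xi}$ has separable range, and then Lemma~\ref{lemma12sept} forces $T=\widetilde{P}_\sigma T\widetilde{P}_\sigma$ for some countable $\sigma$. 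This is precisely the content of the paper's Lemmas~\ref{lemma12sept} and~\ref{lemma28june2}, which in fact deliver a single $\epsilon>0$ uniformly in $\xi$, so your pigeonhole extraction, while legitimate, is not where the difficulty lies.

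The serious gap is in the image-side control. Your recursion only arranges that $Tf_\alpha$ falls outside the range of $\widetilde{P}_{\tau_\alpha}$, i.e.\ that it has \emph{some} mass beyond $\tau_\alpha$; nothing constrains the part of $Tf_\alpha$ living on $[0,\tau_\alpha]$, which may overlap all the earlier images with large coefficients. Then $(Tf_\alpha)_{\alpha<\omega_1}$ need not be (even almost) disjointly supported, and $T$ need not be bounded below on $\cllin\{f_\alpha : \alpha<\omega_1\}$: in a finite combination $\sum_\alpha c_\alpha f_\alpha$ the "new hump" of one $Tf_\alpha$ can be cancelled by the uncontrolled "old parts" of later images, so the claimed "disjointness of essential supports" has simply not been arranged. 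What makes this work in the paper is the Rudin matrix representation: by Proposition~\ref{LoyWillis3.1} each row of $T$ is countably supported, so (after subtracting the rank-one operator $F=k^T_{\omega_1}\otimes\epsilon_{\omega_1}$, which is bounded exactly when $T\in\mathscr{M}$ --- the case $T\notin\mathscr{M}$ being dispatched separately via Theorem~\ref{coorfreecharofLWideal}) one can choose $\xi$ so large that \emph{every} $f$ supported in $(\xi,\omega_1)$ satisfies $\supp(Tf)\subseteq(\eta,\omega_1)$, where $\eta$ dominates the supports of all earlier images; this is Lemma~\ref{lemma28june3}. Your proposal contains no mechanism of this kind (nor the resulting case split on whether $T\in\mathscr{M}$), and without it the final step, that $T$ fixes the copy of $c_0(\omega_1)$ spanned by the $f_\alpha$, does not follow.
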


\noindent 
\textsl{Warning!}  Theorem~\ref{thmseprange} does \textsl{not} imply
that the ideal $\mathscr{G}_{C([0,\sigma])}(C([0,\omega_1]))$ is
closed for each countable ordinal $\sigma$, despite the equivalence of
conditions~\alphref{thmseprange2g} and~\alphref{thmseprange2f}. The
reason is that, for given
$T\in\overline{\mathscr{G}}_{C([0,\tau])}(C([0,\omega_1]))$ (where
$\tau$ is a countable ordinal), the ordinal~$\sigma$ such
that~\alphref{thmseprange2g} holds may be much larger than~$\tau$ and
depend on~$T$.  \medskip

Finally, in Section~\ref{section5ideallattice}, we study the entire
lattice of closed ideals of $\mathscr{B}(C([0,\omega_1]))$. To
classify all the closed ideals of $\mathscr{B}(C([0,\omega_1]))$ seems
an impossible task. In the first instance, one would need to classify
the closed ideals of $\mathscr{B}(C([0,\omega^{\omega^\alpha}]))$ for
each countable ordinal~$\alpha$, something that already appears
intractable; it has currently been achieved only in the simplest case
$\alpha = 0$, where $C([0,\omega])\cong c_0$. 

Figure~\ref{diagramClosedIdeals} below summarizes the findings of
Section~\ref{section5ideallattice}, using the following conventions:
(i)~we suppress $C([0,\omega_1])$ everywhere, thus writing
$\mathscr{K}$ instead of $\mathscr{K}(C([0,\omega_1]))$ for the ideal
of compact operators on~$C([0,\omega_1])$, \emph{etc.};
(ii)~$\xymatrix{\mathscr{I}\,\ar@{^{(}->}[r]&\!\mathscr{J}}$ means
that the ideal $\mathscr{I}$ is properly contained in the ideal
$\mathscr{J}$; (iii)~a double-headed arrow indicates that there are no
closed ideals between~$\mathscr{I}$ and~$\mathscr{J}$; (iv)~$\alpha$
denotes a countable ordinal; and
(v)~$K_\alpha=[0,\omega^{\omega^\alpha}]$.
\begin{figure}[h]%
  $\spreaddiagramrows{-5.2ex}%
  \xymatrix{%
    &\mathscr{B}\\%
    \\ \\ \\ \\ %
    &\mathscr{M}\ar@{^{(}->>}[uuuuu]\\%
    \\ \\ \\ \\%
    &\mathscr{X}+\overline{\mathscr{G}}_{c_0\smashw{(\omega_1)}}%
    \ar@{^{(}->}[uuuuu]\\%
    \mathscr{X}\,\ar@{^{(}->}[ur]\\
    & \vdots\ar@{^{(}->}[uu]\\%
    \vdots\ar@{^{(}->}[uu]\\%
    &\overline{\mathscr{G}}_{C(K_{\alpha+1})\oplus c_0(\omega_1)}%
    \ar@{^{(}->}[uu]\\%
    \overline{\mathscr{G}}_{C(K_{\alpha+1})}%
    \ar@{^{(}->}[uu]\ar@{^{(}->}[ur]\\%
    &\overline{\mathscr{G}}_{C(K_{\alpha})\oplus c_0(\omega_1)}%
    \ar@{^{(}->}[uu]\\%
    \overline{\mathscr{G}}_{C(K_{\alpha})}%
    \ar@{^{(}->}[uu]\ar@{^{(}->}[ur]\\%
    & \vdots\ar@{^{(}->}[uu]\\%
    \vdots\ar@{^{(}->}[uu]\\%
    &\overline{\mathscr{G}}_{C(K_1)\oplus c_0(\omega_1)}%
    \ar@{^{(}->}[uu]\\%
    \overline{\mathscr{G}}_{C(K_1)}%
    \ar@{^{(}->}[uu]\ar@{^{(}->}[ur] &&
    \;\overline{\mathscr{G}}_{c_0(\omega_1)}%
    \ar@{_{(}->}[ul]\\%
    \\ \\ \\ \\ %
    &\;\overline{\mathscr{G}}_{c_0}%
    \ar@{_{(}->}[uuuuul]\ar@{^{(}->>}[uuuuur]\\%
    \\ \\ \\ \\ %
    &\mathscr{K}\ar@{^{(}->>}[uuuuu]\\%
    \\ \\ \\ \\%
    &\{0\}\ar@{^{(}->>}[uuuuu]}$
  \caption{Partial structure of the lattice of  closed ideals of
    $\mathscr{B} =
    \mathscr{B}(C([0,\omega_1]))$.}\label{diagramClosedIdeals} 
\end{figure}
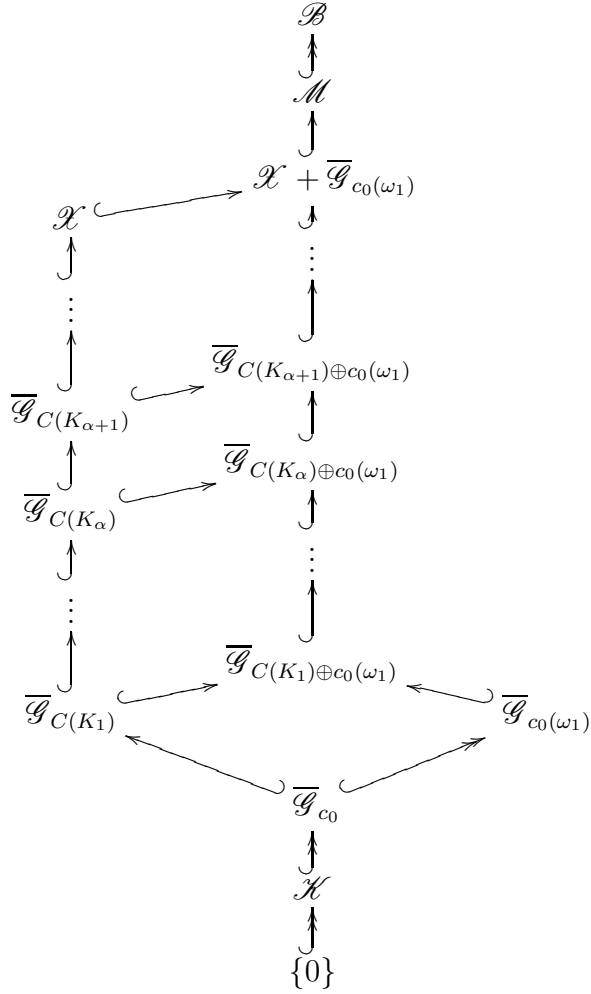

\section{Preliminaries}\label{section2}
\noindent
All Banach spaces are over the scalar field~$\mathbb{K}$, where
$\mathbb{K} = \mathbb{R}$ or $\mathbb{K} = \mathbb{C}$.  The term
\emph{ideal} always means two-sided ideal.  By an \emph{operator}, we
under\-stand a bounded linear operator between Banach spaces.  We
write~$\mathscr{B}(X)$ for the Banach algebra of all operators on the
Banach space~$X$, equipped with the operator norm.  Since
$\mathscr{B}(X)$ is unital, Krull's theorem implies that every proper
ideal of~$\mathscr{B}(X)$ is contained in a maximal ideal. It is well
known that every non-zero ideal of $\mathscr{B}(X)$ contains the ideal
$\mathscr{F}(X)$ of finite-rank operators on~$X$.

We define the \textit{support} of a scalar-valued function~$f$ defined
on a set~$K$ by $\supp(f)=\{k\in K : f(k)\neq 0\}$. When $K$ is a
compact space, $C(K)$ denotes the Banach space of all continuous
scalar-valued functions on $K$, equipped with the supremum norm. For
$k\in K$, the \emph{point evaluation} at~$k$ is the contractive
functional $\epsilon_k\in C(K)^*$ given by $\epsilon_k(f) = f(k)$.

The \emph{Kronecker delta} of a pair of ordinals $\alpha$ and~$\beta$
is given by $\delta_{\alpha,\beta} = 1$ if $\alpha = \beta$ and
$\delta_{\alpha,\beta} =0$ otherwise.  By convention, we consider~$0$
a limit ordinal.  For an ordi\-nal~$\sigma$, we write $[0,\sigma]$ for
the set of ordinals less than or equal to~$\sigma$, equipped with the
order topology.  This is a compact Hausdorff space which is metrizable
if and only if it is separable if and only if $\sigma$ is countable.
(As a set, $[0,\sigma]$ is of course equal to~$\sigma+1$; we use the
notation~$[0,\sigma]$ to emphasize that it is a topological space.)
The symbols~$\omega$ and~$\omega_1$ are reserved for the first
infinite and uncountable ordinal, respectively, while~$\mathbb{N}$
denotes the set of positive integers. We shall use extensively the
well-known fact that a scalar-valued function on $[0,\omega_1]$ is
continuous at~$\omega_1$ if and only if it is eventually constant.

Suppose that~$\sigma$ is an infinite ordinal, and let
$T\in\mathscr{B}(C([0,\sigma]))$. For each ordinal
\mbox{$\alpha\in[0,\sigma]$}, the functional $f\mapsto Tf(\alpha),\,
C([0,\sigma])\to\mathbb{K}$, is continuous, so by a result of
Rudin~\cite{Ru}, there are unique scalars $T_{\alpha,\beta}$, where
$\beta\in[0,\sigma]$, such that
\[ \sum_{\beta\in[0,\sigma]}|T_{\alpha,\beta}|<\infty\qquad
\text{and}\qquad Tf(\alpha) = \sum_{\beta\in [0,\sigma]}T_{\alpha,
  \beta}f(\beta)\qquad (f\in C([0,\sigma])). \] We can therefore
associate a $[0,\sigma]\times [0,\sigma]$-matrix $[T_{\alpha, \beta}]$
with~$T$. Note that the composition~$ST$ of operators~$S$ and $T$
on~$C([0,\sigma])$ corresponds to standard matrix multiplication in
the sense that
\begin{equation}\label{matrixmult}
  (ST)_{\alpha,\gamma}=\sum_{\beta\in [0,\sigma]}S_{\alpha,
    \beta}T_{\beta,\gamma}\qquad  (\alpha,\gamma\in [0,\sigma]).  
\end{equation}

We shall now specialize to the case where $\sigma = \omega_1$.  For
$T\in\mathscr{B}(C([0,\omega_1]))$ and $\alpha\in[0,\omega_1]$, we
denote by $r_\alpha^T$ and~$k_\alpha^T$ the $\alpha^{\text{th}}$ row
and column of the matrix of~$T$, respectively, considered as
scalar-valued functions defined on~$[0,\omega_1]$; thus
$r_\alpha^T\colon \beta\mapsto T_{\alpha,\beta}$ and $k_\alpha^T\colon
\beta\mapsto T_{\beta,\alpha}$. The following result of Loy and Willis
summarizes the basic properties of these functions.

\begin{proposition}\label{LoyWillis3.1}
  \emph{(\cite[Proposition~3.1]{LW})} Let $T$ be an operator
  on~$C([0,\omega_1])$. Then:
  \begin{romanenumerate}
  \item\label{LoyWillis3.1i} the function $r^T_\alpha$ is absolutely
    summable for each ordinal $\alpha\in[0,\omega_1]$, hence has
    countable support, and
    \[ \|T\|=\sup\biggl\{\sum_{\beta\in [0,\omega_1]}|T_{\alpha,
      \beta}| : \alpha\in [0,\omega_1]\biggr\}; \]
  \item\label{LoyWillis3.1ii} the function $k^T_\alpha$ is continuous
    whenever $\alpha=0$ or $\alpha$ is a countable successor ordinal;
  \item\label{LoyWillis3.1iii} the function $k^T_\alpha$ is continuous
    at $\omega_1$ for each countable ordinal~$\alpha;$
  \item\label{LoyWillis3.1iv} the restriction of $k^T_{\omega_1}$ to
    $[0,\omega_1)$ is continuous, and $\lim_{\alpha\to
      \omega_1}k^T_{\omega_1}(\alpha)$ exists.
  \end{romanenumerate}
\end{proposition}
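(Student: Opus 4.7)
The plan is to treat the four parts in sequence. Part~(i) is a direct application of the Riesz--Kakutani representation theorem: the bounded linear functional $f\mapsto Tf(\alpha)$ on $C([0,\omega_1])$ is represented by a unique regular Borel measure $\mu_\alpha$, and the key additional input is that $[0,\omega_1]$ is scattered---every non-empty closed subset has an isolated point---which forces $\mu_\alpha$ to be purely atomic and hence concentrated on a countable set. Writing $\mu_\alpha = \sum_\beta T_{\alpha,\beta}\epsilon_\beta$ yields the absolute summability and countable support, and the norm identity follows from $\|T\| = \sup_\alpha\|\epsilon_\alpha\circ T\| = \sup_\alpha\|\mu_\alpha\|$. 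Part~(ii) is equally quick: when $\alpha = 0$ or $\alpha$ is a countable successor, $\{\alpha\}$ is open (being $[0,1)$ or $(\alpha-1,\alpha+1)$), so $\mathbf{1}_{\{\alpha\}} \in C([0,\omega_1])$, and $k^T_\alpha = T\mathbf{1}_{\{\alpha\}}$ is continuous.

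For part~(iii), the remaining case is a countable limit ordinal $\alpha$; I would exploit the characterization of continuity at $\omega_1$ as eventual constancy. Pick $\alpha_n\nearrow\alpha$ strictly increasing. Because $\alpha<\omega_1$, the sets $[0,\alpha_n]$ and $[0,\alpha]$ are clopen, so the functions $g_n = T\mathbf{1}_{[0,\alpha_n]}$ and $g = T\mathbf{1}_{[0,\alpha]}$ are continuous, hence eventually constant near $\omega_1$. Let $\beta_n$ and $\beta_\infty$ be the respective countable stabilization points; the supremum $\beta^* = \sup\{\beta_n, \beta_\infty\}$ is still countable. For $\beta\geq\beta^*$, the identity $g(\beta) - g_n(\beta) = \sum_{\alpha_n<\gamma\leq\alpha} T_{\beta,\gamma}$ combined with the $\ell_1$-summability of $r^T_\beta$ and $r^T_{\omega_1}$ (from~(i)) lets one pass to the limit $n\to\infty$ and deduce $T_{\beta,\alpha} = T_{\omega_1,\alpha}$. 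Thus $k^T_\alpha$ is eventually constant, so continuous at~$\omega_1$.

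Part~(iv) is the most delicate. Continuity of $k^T_{\omega_1}|_{[0,\omega_1)}$ at $0$ and at countable successors is automatic; at a countable limit ordinal $\beta_0$ I would approximate $T_{\beta,\omega_1}$ by $T\mathbf{1}_{(\alpha,\omega_1]}(\beta) = T_{\beta,\omega_1} + \sum_{\alpha<\gamma<\omega_1} T_{\beta,\gamma}$, which is continuous in $\beta$ for each fixed $\alpha>\beta_0$ (since $(\alpha,\omega_1]$ is clopen). As $\alpha\nearrow\omega_1$, the tail correction tends to $0$ pointwise in $\beta$ by the $\ell_1$-summability of each row. The main obstacle is upgrading this pointwise decay to \emph{uniform} decay on a neighborhood of~$\beta_0$: rows need not admit a common summable majorant, and $[0,\omega_1)$ has uncountable cofinality, so the naive "take the supremum of countably many countable stabilization ordinals" argument from part~(iii) is not available. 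I anticipate that this step requires either a contradiction argument---if uniform tail control fails, one patches together norm-bounded continuous functions forcing $T$ to be unbounded---or an application of Proposition~\ref{LoyWillis3.1}\romanref{LoyWillis3.1iii} to an auxiliary operator constructed from $T$. Once uniform control is secured, the existence of $\lim_{\alpha\to\omega_1^-} k^T_{\omega_1}(\alpha)$ follows by combining the eventual constancy of $T\mathbf{1}$ with part~(iii) applied to the countably many $\gamma$ in $\supp(r^T_{\omega_1})\cap[0,\omega_1)$, whose stabilization ordinals do have a common countable upper bound.
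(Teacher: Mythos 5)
The paper offers no proof of this proposition (it simply cites \cite[Proposition~3.1]{LW}), so the only question is whether your argument is complete. Parts (i)--(iii) are correct: the Rudin/scatteredness representation and the norm identity in (i), the isolated-point observation in (ii), and the comparison of $T\mathbf{1}_{[0,\alpha_n]}$ with $T\mathbf{1}_{[0,\alpha]}$ in (iii) all work. The genuine gap is in (iv), and you flag it yourself: continuity of $k^T_{\omega_1}$ at countable limit ordinals is deferred to an anticipated ``contradiction argument or auxiliary operator'' that is never supplied, and the existence of $\lim_{\alpha\to\omega_1}k^T_{\omega_1}(\alpha)$ is made to rest on that missing step plus a sketch that does not suffice: applying (iii) to the countably many $\gamma\in\supp(r^T_{\omega_1})\cap[0,\omega_1)$ and using the eventual constancy of $T\mathbf{1}_{[0,\omega_1]}$ only gives, for all large $\beta<\omega_1$, the identity $T_{\beta,\omega_1}=T_{\omega_1,\omega_1}-\sum_{\gamma\in[0,\omega_1)\setminus\supp(r^T_{\omega_1})}T_{\beta,\gamma}$, and proving that this remaining partial row sum stabilizes is exactly as hard as the statement being proved, so nothing has been gained.

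Moreover, the obstacle you diagnose in the continuity half is illusory, while the real work sits at $\omega_1$. For a countable limit ordinal $\beta_0$ no uniform tail control over an uncountable set is needed: the neighbourhood $(\beta',\beta_0]$ of $\beta_0$ is \emph{countable}, so $\sigma=\sup\bigcup_{\beta\in(\beta',\beta_0]}\bigl(\supp(r^T_\beta)\setminus\{\omega_1\}\bigr)$ is a countable ordinal, and for every $\beta\in(\beta',\beta_0]$ one has $T\mathbf{1}_{(\sigma,\omega_1]}(\beta)=T_{\beta,\omega_1}$; thus $k^T_{\omega_1}$ agrees with the continuous function $T\mathbf{1}_{(\sigma,\omega_1]}$ on a neighbourhood of $\beta_0$ and is continuous there --- precisely the ``supremum of countably many countable ordinals'' move from (iii), which you wrongly discarded on cofinality grounds (uncountable cofinality matters only at $\omega_1$ itself). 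For the limit at $\omega_1$, note first that the limit exists if and only if $k^T_{\omega_1}$ is eventually constant on $[0,\omega_1)$. If it is not, there is $\epsilon>0$ such that above every countable ordinal there is a pair $\gamma_1<\gamma_2$ with $|T_{\gamma_1,\omega_1}-T_{\gamma_2,\omega_1}|\ge\epsilon$. Choose recursively $\delta_n<\gamma^n_1<\gamma^n_2<\delta_{n+1}$ with $|T_{\gamma^n_1,\omega_1}-T_{\gamma^n_2,\omega_1}|\ge\epsilon$ and $\delta_{n+1}$ exceeding $\sup\bigl(\supp(r^T_{\gamma^m_i})\setminus\{\omega_1\}\bigr)$ for all $m\le n$, $i=1,2$; put $\sigma=\sup_n\delta_n<\omega_1$ and $f=\mathbf{1}_{(\sigma,\omega_1]}$. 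Then $Tf(\gamma^n_i)=T_{\gamma^n_i,\omega_1}$, while $\gamma^n_1,\gamma^n_2\to\sigma$ and $Tf$ is continuous at $\sigma$, forcing $|T_{\gamma^n_1,\omega_1}-T_{\gamma^n_2,\omega_1}|\to0$, a contradiction. Without some argument of this kind, part (iv) --- the part of the proposition that actually carries the weight in the paper --- remains unproved.
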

Note that the statement in~\romanref{LoyWillis3.1iv} is the best
possible because the final column of the matrix associated with the
identity operator is equal to $\mathbf{1}_{\{\omega_1\}}$, so it is
not continuous at~$\omega_1$.

Loy and Willis studied the subspace $\mathscr{M}$
of~$\mathscr{B}(C([0,\omega_1]))$ consisting of those operators~$T$
such that $k^T_{\omega_1}$ is continuous at $\omega_1$. They observed
that $\mathscr{M}$ is an ideal of codimension one, hence maximal (see
\cite[p.~336]{LW}); this is the \emph{Loy--Willis ideal}.  It is
straightforward to verify that every operator on~$C([0,\omega_1])$ not
belonging to~$\mathscr{M}$ has uncountably many non-zero rows and
columns.
Although not required here, let us mention that the key result of Loy
and Willis \cite[Theorem~3.5]{LW} states that the ideal~$\mathscr{M}$
has a bounded right approximate identity.

\section{The Loy--Willis ideal: completion of the proof of
  Theorem~\ref{coorfreecharofLWideal}}\label{sectionpfofmainthm}
\noindent
In preparation for the proof of Theorem~\ref{coorfreecharofLWideal}
$(\Leftarrow)$, we require three lemmas.

\enlargethispage{10pt}
\begin{lemma}\label{fredholm}
  Let $T$ be a Fredholm operator acting on a Banach space~$X$ which is
  isomorphic to its hyperplanes (and hence to all its closed subspaces
  of finite co\-dimen\-sion). Then the identity operator on~$X$ factors
  through~$T$.
\end{lemma}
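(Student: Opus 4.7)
The plan is to exploit the two defining properties of a Fredholm operator, namely that $\ker T$ is finite-dimensional and $\operatorname{range} T$ is closed and of finite codimension. Both of these finite-dimensional features will let me decompose $X$ in a compatible way, and then the hypothesis on hyperplanes will let me replace the complements by copies of $X$ itself.

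Concretely, I would first note that, by induction on the codimension, the assumption that $X$ is isomorphic to its hyperplanes implies that every closed subspace of~$X$ of finite codimension is isomorphic to~$X$. Next, since $\ker T$ is finite-dimensional, it is complemented in~$X$, so we may write $X = \ker T \oplus Y$ for some closed subspace~$Y$ of finite codimension; likewise $\operatorname{range} T$ is closed and of finite codimension, hence complemented, say with projection $Q\in\mathscr{B}(X)$ onto $\operatorname{range} T$. The restriction $T|_Y\colon Y\to\operatorname{range} T$ is a bijective bounded operator between Banach spaces, so by the open mapping theorem it is an isomorphism.

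Now I would build $R$ and~$S$ directly. Using the first step, pick an isomorphism $U\colon X\to Y$, and let $R\in\mathscr{B}(X)$ be the composition of~$U$ with the inclusion $Y\hookrightarrow X$. Then $TR\colon X\to\operatorname{range} T$ is the composition of two isomorphisms, so it is itself an isomorphism of Banach spaces. Define $S\in\mathscr{B}(X)$ by
\[ S = (TR)^{-1}\circ Q, \]
which makes sense because $Q$ takes values in $\operatorname{range} T$, the domain of $(TR)^{-1}$. For every $x\in X$ we have $TRx\in\operatorname{range} T$, so $Q(TRx)=TRx$ and therefore $STRx=(TR)^{-1}(TRx)=x$. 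Hence $STR=I$, as required.

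I do not anticipate a serious obstacle here: the argument is essentially a bookkeeping exercise once the Fredholm alternative and the hyperplane hypothesis are combined. The only point that needs a line of justification is the reduction of the hyperplane hypothesis to arbitrary finite codimension, and this is a routine induction using that each closed subspace of $X$ of codimension~$n+1$ is a hyperplane in some closed subspace of codimension~$n$.
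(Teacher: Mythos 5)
Your proof is correct and follows essentially the same route as the paper: decompose $X = \ker T \oplus Y$, observe that $T|_Y\colon Y\to T(X)$ is an isomorphism, use the complementedness of $T(X)$ and the isomorphism $X\cong Y$ furnished by the finite-codimension hypothesis. You merely make the factorization $STR=I$ explicit where the paper phrases it as the identity factoring through $\widetilde{T}$ and $\widetilde{T}$ factoring through $T$.
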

\begin{proof}
  Choose a closed subspace~$W$ of~$X$ which is complementary to~$\ker
  T$. Then $W$ has finite codimension in~$X$, so $W$ is isomorphic
  to~$X$ by assumption, and the restriction $\widetilde{T}\colon
  w\mapsto Tw,\, W\to T(X)$, is an iso\-mor\-phism, hence the identity
  operator on~$X$ factors through~$\widetilde{T}$. Now the result
  follows because $T(X)$ is complemented in~$X$, so 
  $\widetilde{T}$ factors through~$T$.
\end{proof}

\begin{lemma}\label{thePhiLemma} Let
  $\Xi=(\xi_\sigma)_{\sigma\in[0,\omega_1)}$ be a strictly increasing
  trans\-finite sequence of countable ordinals, and define
  $\xi_{\omega_1} = \omega_1$ and $\zeta_\lambda = \sup\{\xi_\sigma :
  \sigma\in[0,\lambda)\}$ for each limit ordinal
  \mbox{$\lambda\in[\omega,\omega_1]$}. Then:
  \begin{romanenumerate}
  \item\label{thePhiLemma1} the mapping $U_\Xi$ given by
    $U(\mathbf{1}_{[0,\sigma]}) = \mathbf{1}_{[0,\xi_\sigma]}$ for
    each $\sigma\in[0,\omega_1]$ extends unique\-ly to a linear isometry
    of $C([0,\omega_1])$ onto
    $\cllin\{\mathbf{1}_{[0,\xi_\sigma]} :
    \sigma\in[0,\omega_1]\};$\\
  \item\label{thePhiLemma2} \mbox{}\hspace{2.5cm}
    $\displaystyle{[0,\omega_1] =
      [0,\xi_0]\cup\bigcup_{\sigma\in[0,\omega_1)}[\xi_\sigma+1,
      \xi_{\sigma+1}]\cup\bigcup_{\lambda\in[\omega,\omega_1]\
        \text{\normalfont{limit}}} [\zeta_\lambda,\xi_\lambda],}$\\[1ex]
    where the intervals on the right-hand side are pairwise disjoint;
  \item\label{thePhiLemma3} the mapping $\phi_\Xi\colon
    [0,\omega_1]\to[0,\omega_1]$ given by
    \[ \mbox{}\quad \varphi_\Xi(\alpha) = \begin{cases} \xi_0 &
      \text{for}\ \alpha\in[0,\xi_0]\\
      \xi_{\sigma+1} & \text{for}\ \alpha\in [\xi_\sigma+1,
      \xi_{\sigma+1}],\ \text{where}\ \sigma\in[0,\omega_1),\\
      \zeta_\lambda & \text{for}\
      \alpha\in[\zeta_\lambda,\xi_\lambda],\ \text{where}\
      \lambda\in[\omega,\omega_1]\ \text{is a limit
        ordinal,} \end{cases} \] is continuous and satisfies
    $\phi_\Xi\circ\phi_\Xi =\phi_\Xi;$ hence the associated
    composition operator $\Phi_\Xi\colon f\mapsto f\circ \varphi_\Xi$
    defines a contractive projection of~$C([0,\omega_1])$ onto the
    subspace $\cllin\{\mathbf{1}_{[0,\xi_\sigma]} :
    \sigma\in[0,\omega_1]\};$
  \item\label{matrixPhiXi} the matrix associated with the operator
    $\Phi_\Xi$ is given by
    \[ \mbox{}\qquad(\Phi_\Xi)_{\alpha,\beta} = \begin{cases}
      \delta_{\beta,\xi_0} & \text{for}\
      \alpha\in[0,\xi_0]\\
      \delta_{\beta,\xi_{\sigma+1}} & \text{for}\
      \alpha\in[\xi_\sigma+1, \xi_{\sigma+1}],\ \text{where}\
      \sigma\in[0,\omega_1),\\
      \delta_{\beta,\zeta_\lambda} & \text{for}\
      \alpha\in[\zeta_\lambda,\xi_\lambda],\ \text{where}\
      \lambda\in[\omega,\omega_1]\ \text{is a limit
        ordinal.} \end{cases} \] 
  \end{romanenumerate}
\end{lemma}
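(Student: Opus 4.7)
For \romanref{thePhiLemma1}, my first step is to establish density of $\operatorname{lin}\{\mathbf{1}_{[0,\sigma]} : \sigma\in[0,\omega_1]\}$ in $C([0,\omega_1])$ via the Stone--Weierstrass theorem: this linear span is a unital subalgebra (using $\mathbf{1}_{[0,\sigma]}\mathbf{1}_{[0,\tau]} = \mathbf{1}_{[0,\min(\sigma,\tau)]}$) that separates points and is self-adjoint because its generators are real-valued. I would then observe that for distinct $\sigma_1<\cdots<\sigma_n$ and any scalars $c_1,\ldots,c_n$, both $\bigl\|\sum_{j=1}^n c_j\mathbf{1}_{[0,\sigma_j]}\bigr\|$ and $\bigl\|\sum_{j=1}^n c_j\mathbf{1}_{[0,\xi_{\sigma_j}]}\bigr\|$ reduce to $\max_{1\le k\le n}\bigl|\sum_{j\ge k}c_j\bigr|$, since $(\xi_{\sigma_j})$ is also strictly increasing. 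Hence the prescribed rule is well-defined and isometric on the dense subspace and extends uniquely to the claimed surjective isometry.

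For \romanref{thePhiLemma2}, exhaustiveness will follow by considering, for each $\alpha > \xi_0$, the ordinal $\tau := \min\{\rho\in[0,\omega_1] : \xi_\rho\ge\alpha\}$, which exists because $\xi_{\omega_1} = \omega_1$: if $\tau$ is a successor then $\alpha$ lies in an interval of the second type, while if $\tau$ is a limit then $\zeta_\tau = \sup_{\rho<\tau}\xi_\rho\le\alpha$ puts $\alpha$ in an interval of the third type. For disjointness, the crucial observation is that since $\xi$ is strictly increasing, the supremum $\zeta_\lambda$ at a limit $\lambda$ is never attained on $[0,\lambda)$, hence $\zeta_\lambda\neq\xi_{\rho+1}$ for any $\rho$; this, combined with direct endpoint comparisons, keeps successor and limit intervals apart.

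For \romanref{thePhiLemma3}, the idempotence $\varphi_\Xi\circ\varphi_\Xi = \varphi_\Xi$ reduces to checking that $\xi_0$, each $\xi_{\sigma+1}$ and each $\zeta_\lambda$ is fixed by $\varphi_\Xi$, which is transparent from the case definition. Continuity need only be verified at limit ordinals, and by \romanref{thePhiLemma2} each such $\mu$ lies in a unique partition interval on which $\varphi_\Xi$ is already constant from the left unless $\mu = \zeta_\lambda$. In this delicate case, for any $\delta<\zeta_\lambda$ I would pick $\rho<\lambda$ with $\xi_\rho>\delta$ and show that $\varphi_\Xi(\alpha)>\delta$ for every $\alpha\in(\xi_\rho,\mu)$: if $\alpha$ lies in a successor interval then $\varphi_\Xi(\alpha) = \xi_{\sigma+1}\ge\alpha>\delta$, while if $\alpha$ lies in a limit interval $[\zeta_{\lambda'},\xi_{\lambda'}]$ meeting $(\xi_\rho,\mu)$ then necessarily $\lambda'>\rho$, yielding $\varphi_\Xi(\alpha) = \zeta_{\lambda'}\ge\xi_\rho>\delta$. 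This forces $\varphi_\Xi(\alpha)\to\mu$, so $\Phi_\Xi$ is a contractive projection. To identify its range with $\cllin\{\mathbf{1}_{[0,\xi_\sigma]} : \sigma\in[0,\omega_1]\}$, I would verify directly from the formula for $\varphi_\Xi$ that $\Phi_\Xi$ fixes each $\mathbf{1}_{[0,\xi_\sigma]}$; the reverse inclusion follows from the density in \romanref{thePhiLemma1} once one checks that each $\Phi_\Xi(\mathbf{1}_{[0,\sigma]})$ is either $0$ or of the form $\mathbf{1}_{[0,\xi_\tau]}$, via a case split on whether the least $\tau^*$ with $\xi_{\tau^*}>\sigma$ is $0$, a successor, or a limit.

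Finally, \romanref{matrixPhiXi} falls out of Rudin's uniqueness of the matrix representation: since $(\Phi_\Xi f)(\alpha) = f(\varphi_\Xi(\alpha)) = \sum_{\beta\in[0,\omega_1]}\delta_{\beta,\varphi_\Xi(\alpha)}\,f(\beta)$, the matrix entries must be $(\Phi_\Xi)_{\alpha,\beta} = \delta_{\beta,\varphi_\Xi(\alpha)}$, and substituting the piecewise formula from \romanref{thePhiLemma3} gives the stated expression. I expect the main obstacle throughout to be the continuity argument at $\mu = \zeta_\lambda$ in \romanref{thePhiLemma3}: a priori, nearby limit intervals could drag $\varphi_\Xi$ far below $\mu$, and it is only the observation that any limit interval meeting a left neighbourhood $(\xi_\rho,\mu)$ must be indexed by some $\lambda' > \rho$ (so that $\zeta_{\lambda'}\ge\xi_\rho$) that rescues the argument.
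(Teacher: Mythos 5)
Your proposal is correct and follows essentially the same route as the paper's proof: the norm identity $\bigl\|\sum_j c_j\mathbf{1}_{[0,\sigma_j]}\bigr\|=\max_m\bigl|\sum_{j\ge m}c_j\bigr|$ plus density for (i), direct verification of (ii), continuity in (iii) reduced to the delicate case $\mu=\zeta_\lambda$ via exactly the same observation about limit intervals meeting $(\xi_\rho,\mu)$, the range identified through the action on the functions $\mathbf{1}_{[0,\sigma]}$, and (iv) read off from the composition formula. The only point worth adding is that at $\mu=\zeta_\lambda$ you should also record the upper bound $\varphi_\Xi(\alpha)\le\mu$ for $\alpha\in(\xi_\rho,\mu]$ --- immediate from the disjointness in (ii), since a partition interval meeting $(\xi_\rho,\mu)$ cannot contain $\mu$ and hence lies in $[0,\mu)$ --- which together with your lower bound $\varphi_\Xi(\alpha)>\delta$ gives $\varphi_\Xi(\alpha)\to\mu$.
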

 \begin{figure}[h]
    \begin{tikzpicture}[y=.2cm, x=.7cm,font=\sffamily, scale=0.7]
      {\color{white}\draw (0,0) -- coordinate (x axis mid) (10,0);}
      \draw (0,0) -- coordinate (y axis mid) (0,30); \draw (10,0) --
      coordinate (y axis mid) (10,30);
      \foreach \x in {7.9} \draw (\x,34) node[anchor=north]
      {$\zeta_\lambda$}; \foreach \x in {1} \draw (\x,34)
      node[anchor=north] {$\xi_0$}; \foreach \x in {2} \draw (\x,34)
      node[anchor=north] {$\xi_1$}; \foreach \x in {5.7} \draw (\x,34)
      node[anchor=north] {$\xi_2$};
\foreach \x in {6.7} \draw (\x,32.7)
      node[anchor=north] {$\ldots$};	
\foreach \x in {8.9} \draw (\x,32.7)
      node[anchor=north] {$\ldots$};	
      \foreach \x in {10} \draw (\x,33.4) node[anchor=north]
      {$\omega_1$}; \foreach \y in {0} \draw (0pt,\y) -- (0.3,\y)
      node[anchor=east] {$\omega_1\,\,$}; \draw (10,0) -- (9.7,0)
      node[anchor=west] {};
	
      \foreach \y in {30} \draw (0pt,\y) -- (0.3,\y) node[anchor=east]
      {$0\,\,$}; \draw (10,30) -- (9.7,30) node[anchor=west] {};
      \foreach \y in {25.5} \draw (-1.2,\y) node[anchor=west]
      {$\xi_0\,\,$}; \foreach \y in {21.5} \draw (-1.2,\y)
      node[anchor=west] {$\xi_1\,\,$};
	
      \foreach \y in {17} \draw (-1.2,\y) node[anchor=west]
      {$\xi_2\,\,$}; 
\foreach \y in {14.2} \draw (-1.0,\y) 
      node[anchor=west] {$\vdots$};
\foreach \y in {11} \draw (-1.0,\y) 
      node[anchor=west] {$\vdots$};
\foreach \y in {7} \draw (-1.2,\y) 
      node[anchor=west] {$\xi_\lambda\,\,$};
\foreach \y in {4} \draw (-1.0,\y) 
      node[anchor=west] {$\vdots$};
      \draw plot[mark=*, mark options={fill=black}] (1, 30) -- (1,
      26); \draw plot[mark=*] (1, 26); \draw plot[mark=*] (9.9, 0.35);
      \draw plot[mark=*, mark options={fill=white}] (2, 26) -- (2,
      22); \draw plot[mark=*] (2, 22); \draw plot[mark=*, mark
      options={fill=white}] (5.7, 22) -- (5.7, 17); \draw plot[mark=*]
      (5.7, 17); \draw plot[mark=*, mark options={fill=black}] (7.9,
     11) -- (7.9, 7); \draw plot[mark=*, mark options={fill=black}]
      (7.9, 7);
    \end{tikzpicture} \caption{Structure of the the matrix associated
      with $\Phi_\Xi$.}\label{figRxmatrix}
  \end{figure}
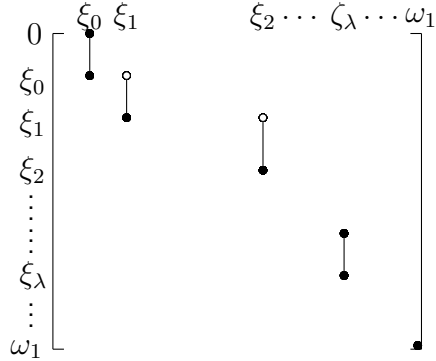
\begin{proof}
  \romanref{thePhiLemma1}.  For $n\in\mathbb{N}$, scalars $c_1,\ldots,
  c_n$ and ordinals $0\le \sigma_1< \cdots< \sigma_n\leq \omega_1$, we
  have
  \[\biggl\|\sum_{j=1}^n c_j \mathbf{1}_{[0,\sigma_j]}\biggr\|
  =\max_{1\le m\le n}\biggl|\sum_{j=m}^n c_j\biggr|=
  \biggl\|\sum_{j=1}^n c_j \mathbf{1}_{[0,\xi_{\sigma_j}]}\biggr\|. \]
  Hence $U_\Xi$ defines a linear 
isometry of
  $\lin\{\mathbf{1}_{[0,\sigma]} : \sigma\in[0,\omega_1]\}$ on\-to
  \mbox{$\lin\{\mathbf{1}_{[0,\xi_\sigma]} :
    \sigma\in[0,\omega_1]\}$}.  Now the conclusion follows because the
  domain of $U_\Xi$ is dense in $C([0,\omega_1])$.

  \romanref{thePhiLemma2}. This is straightforward to verify.

  \romanref{thePhiLemma3}. Clause~\romanref{thePhiLemma2} ensures that
  the definition of $\phi_\Xi$ makes sense.  To prove that~$\phi_\Xi$
  is continuous, suppose that $(\alpha_j)$ is a net in $[0,\omega_1]$
  which converges to~$\alpha$. By the definition of the order
  topology, this means that for each $\beta<\alpha$, we have
  $\beta<\alpha_j\le\alpha$ eventually.  If $\alpha\notin
  \{\zeta_\lambda : \lambda\in[\omega,\omega_1]\ \text{limit}\}$, then
  $\phi_\Xi(\alpha_j) = \phi_\Xi(\alpha)$ eventually, so the
  continuity of $\phi_\Xi$ at~$\alpha$ is clear in this
  case. Otherwise $\alpha = \zeta_\lambda$ for some limit ordinal
  $\lambda\in[\omega,\omega_1]$, and $\phi_\Xi(\alpha) =
  \zeta_\lambda$. Given $\beta<\zeta_\lambda$, we can take
  $\sigma\in[0,\lambda)$ such that $\beta<\xi_\sigma$. Since
  $\xi_\sigma<\xi_{\sigma+1}\le\zeta_\lambda$, we have
  $\xi_\sigma<\alpha_j\le\zeta_\lambda$ eventually. Hence the
  definition of~$\phi_\Xi$ implies that
  $\xi_\sigma<\phi_\Xi(\alpha_j)\le\zeta_\lambda$ eventually, so that
  $\lim_j\phi_\Xi(\alpha_j) = \zeta_\lambda = \phi_\Xi(\alpha$), as
  required.

  We have $\phi_\Xi\circ\phi_\Xi =\phi_\Xi$ because by definition
  $\phi_\Xi(\alpha)$ belongs to the same interval as $\alpha$ in the
  partition of~$[0,\omega_1]$ given in~\romanref{thePhiLemma2}.  Hence
  $\Phi_\Xi$ is a contractive projection. 

  To determine its range, we observe that
  \begin{equation}\label{eqPhiXi}
    \Phi_\Xi(\mathbf{1}_{[0,\alpha]}) = \begin{cases} 0 &\text{for}\
      \alpha\in[0,\xi_0)\\ \mathbf{1}_{[0,\xi_\sigma]} &\text{for}\
      \alpha\in[\xi_0,\omega_1],\ \text{where}\ \sigma =
      \sup\{\tau\in[0,\omega_1]:\xi_\tau\le\alpha\}. \end{cases} 
  \end{equation}  
  Consequently, we have
  \begin{equation}\label{eqRangePhiXi}
    \Phi_\Xi(C([0,\omega_1]))\subseteq
    \cllin\{\mathbf{1}_{[0,\xi_\sigma]} : \sigma\in[0,\omega_1]\}
  \end{equation}
  because the linear span of $\{\mathbf{1}_{[0,\alpha]} :
  \alpha\in[0,\omega_1]\}$ is dense in~$C([0,\omega_1])$.

  Conversely, \eqref{eqPhiXi} implies that
  $\Phi_\Xi(\mathbf{1}_{[0,\xi_\sigma]}) =
  \mathbf{1}_{[0,\xi_\sigma]}$ for each $\sigma\in[0,\omega_1]$, so we
  have equality in~\eqref{eqRangePhiXi} because $\Phi_\Xi$ has closed
  range.

  \romanref{matrixPhiXi}. This is clear from the definition
  of~$\phi_\Xi$.
\end{proof}

\begin{lemma}\label{anothercompop}
  Let $H$ be an uncountable subset of~$[0,\omega_1]$.  Then $H$ is
  order-iso\-mor\-phic to~$[0,\omega_1]$, and the order isomorphism
  $\psi_H\colon [0,\omega_1]\to H$ is continuous with respect to the
  relative topology on~$H$ if and only if~$H$ is closed
  in~$[0,\omega_1]$.

  Now suppose that~$H$ is closed in~$[0,\omega_1]$.  Then $\omega_1\in
  H$, and the composition opera\-tor $\Psi_H\colon f\mapsto f\circ
  \iota_H\circ\psi_H$, where $\iota_H\colon H\to [0,\omega_1]$ denotes
  the inclusion mapping, defines a contractive operator
  on~$C([0,\omega_1])$.
\end{lemma}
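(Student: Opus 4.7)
The plan is to construct $\psi_H$ explicitly by transfinite recursion and then examine when the resulting bijection is continuous. Set $\psi_H(0) = \min H$ and, recursively, $\psi_H(\alpha+1) = \min\{h \in H : h > \psi_H(\alpha)\}$; this minimum exists because $\psi_H([0,\alpha])$ is countable while $H$ is not. At a limit ordinal $\lambda$, take $\psi_H(\lambda)$ to be the least element of $H$ satisfying $\psi_H(\lambda) \geq \sup_{\sigma<\lambda}\psi_H(\sigma)$. A routine transfinite induction shows that $\psi_H$ is strictly order-preserving, and a standard greedy argument (if $h\in H$ were missing, consider $\alpha = \sup\{\sigma : \psi_H(\sigma)\leq h\}$ and derive a contradiction in each of the three cases for~$\alpha$) yields surjectivity. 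Since a strictly increasing enumeration of any subset of~$[0,\omega_1]$ satisfies $h_\alpha \geq \alpha$, the order type of $H$ is at most $\omega_1+1$, and uncountability combined with closedness (the case relevant for the rest of the lemma) forces $\omega_1 \in H$, so the order type is exactly $\omega_1+1$.

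For the equivalence of continuity with closedness, the direction $(\Rightarrow)$ is immediate: if $\psi_H$ is continuous, then $H = \psi_H([0,\omega_1])$ is the continuous image of the compact space $[0,\omega_1]$, hence compact and therefore closed in the Hausdorff space~$[0,\omega_1]$. For $(\Leftarrow)$, suppose that $H$ is closed. Then at any limit ordinal~$\lambda$ the supremum $\sup_{\sigma<\lambda}\psi_H(\sigma)$ is a limit point of~$H$ and therefore lies in~$H$, so the recursion yields $\psi_H(\lambda) = \sup_{\sigma<\lambda}\psi_H(\sigma)$. Thus $\psi_H$ is order-preserving and preserves suprema of increasing transfinite sequences; a direct check on preimages of the sub-basic open rays $\{x : x < y\}$ and $\{x : x > y\}$ in the order topology then delivers the continuity of~$\psi_H$.

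Now assume that $H$ is closed. Since every initial segment $[0,\alpha]$ with $\alpha<\omega_1$ is countable while $H$ is uncountable, $H$ cannot be bounded below~$\omega_1$, so $\sup H = \omega_1$; closedness then forces $\omega_1 \in H$. The composition $\iota_H \circ \psi_H \colon [0,\omega_1] \to [0,\omega_1]$ is therefore continuous as a composition of continuous maps, so $\Psi_H f = f \circ \iota_H \circ \psi_H$ lies in $C([0,\omega_1])$ whenever $f$ does; linearity is clear, and pre-composition cannot increase the supremum norm, so $\|\Psi_H\| \leq 1$.

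The main technical subtlety is the correct treatment of the limit stages in the recursive definition of~$\psi_H$: one must pick an element of~$H$ even when $H$ might fail to be closed, while simultaneously arranging that this forced choice coincides with $\sup_{\sigma<\lambda}\psi_H(\sigma)$ exactly when $H$ is closed. That equivalence is the pivot for the continuity assertion, after which everything else follows routinely from the compactness of $[0,\omega_1]$ and the standard characterization of continuity between linearly ordered spaces endowed with the order topology.
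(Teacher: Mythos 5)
Your argument is correct, but it reaches the key implication (\,$H$ closed $\Rightarrow\psi_H$ continuous\,) by a different route than the paper. The paper does not construct $\psi_H$ at all: it takes the order isomorphism for granted, observes that a closed $H$ is compact, so the bijection $\psi_H$ between compact Hausdorff spaces is continuous if and only if $\psi_H^{-1}$ is, and then verifies continuity of the \emph{inverse} simply by noting that $\psi_H([0,\sigma)) = [0,\psi_H(\sigma))\cap H$ and $\psi_H((\sigma,\omega_1]) = (\psi_H(\sigma),\omega_1]\cap H$ are relatively open, these rays forming a subbasis. You instead build $\psi_H$ by transfinite recursion, show that closedness forces $\psi_H(\lambda)=\sup_{\sigma<\lambda}\psi_H(\sigma)$ at limit stages, and deduce continuity of $\psi_H$ directly from this sup-preservation via preimages of subbasic rays; the $(\Rightarrow)$ direction (continuous image of a compact space is compact, hence closed) is the same in both treatments. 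Your approach is more hands-on and has the merit of making explicit exactly where closedness enters (the limit stages of the recursion), and of flagging a point the paper glosses over with ``clearly'': for a non-closed uncountable $H$ with $\omega_1\notin H$ (e.g.\ $H=[0,\omega_1)$) the order type is only $\omega_1$, so the recursion cannot be completed at the final stage and the literal first assertion of the lemma needs $\omega_1\in H$; you correctly confine the claim of order type $\omega_1+1$ to the closed case, which is all that the rest of the paper uses. The price is that you must carry the routine bookkeeping of the recursion (non-skipping of elements of $H$, surjectivity), which the paper's compactness argument avoids entirely. Both proofs of the final statement ($\omega_1\in H$ and contractivity of $\Psi_H$) coincide.
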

\begin{proof} Clearly $H$ is order-iso\-mor\-phic to~$[0,\omega_1]$.
  If the order isomorphism $\psi_H$ is continuous, then $H$ is compact
  (as the continuous image of the compact 
  space~$[0,\omega_1]$) and hence closed in~$[0,\omega_1]$.

  Conversely, suppose that~$H$ is closed in~$[0,\omega_1]$. Then
  $\psi_H$ is a bijection between two compact Hausdorff spaces, so
  $\psi_H$ is continuous if and only if its inverse is. Now
  \[ \psi_H([0,\sigma)) = [0,\psi_H(\sigma))\cap H\qquad
  \text{and}\qquad \psi_H((\sigma,\omega_1]) =
  (\psi_H(\sigma),\omega_1]\cap H\qquad (\sigma\in[0,\omega_1]), \]
  which shows that $\psi_H^{-1}$ is continuous because the sets
  $[0,\sigma)$ and $(\sigma,\omega_1]$ for $\sigma\in[0,\omega_1]$
  form a subbasis for the topology of~$[0,\omega_1]$.

  The second part of the lemma follows immediately.
\end{proof}

Un\-like~$\Phi_\Xi$, the matrix associated with $\Psi_H$ cannot in
general be depicted schematically; it is, however, possible in the
particular case that we shall consider in the proof of
Theorem~\ref{coorfreecharofLWideal}, as shown in Figure~\ref{fig2}
below.

\begin{proof}[Proof of Theorem~\ref{coorfreecharofLWideal}
  $(\Leftarrow)$]
  Let $T\in \mathscr{B}(C([0,\omega_1]))\setminus \mathscr{M}$.  Going
  through a series of reductions, we shall eventually reach the
  conclusion that there are operators
  \mbox{$R,S\in\mathscr{B}(C([0,\omega_1]))$} and
  $F\in\mathscr{F}(C([0,\omega_1]))$ such that $STR + F = I$. Then
  $STR = I - F$ is a Fredholm operator, and the conclusion follows
  from Lemma~\ref{fredholm}.

  We begin by reducing to the case where each column with countable
  index of the associated matrix vanishes eventually. Indeed, since
  $r^T_{\omega_1}$ is absolutely summable, we can take a countable
  ordinal~$\rho$ such that $T_{\omega_1,\beta} = 0$ whenever $\beta\in
  (\rho,\omega_1)$.
  Proposition~\ref{LoyWillis3.1}\romanref{LoyWillis3.1iii} then
  implies that $k^T_\beta$ is eventually null for each $\beta\in
  (\rho, \omega_1)$, and hence the $\beta^{\text{th}}$ column of the
  operator $T_1 = T(I-P_\rho)$ is eventually null for each
  $\beta\in[0,\omega_1)$. Note, moreover, that $T_1\notin\mathscr{M}$
  because $k_{\omega_1}^{T_1} = k_{\omega_1}^T$.

  Next, perturbing $T_1$ by a finite-rank operator and rescaling, we
  can arrange that the final row and column of its matrix are equal
  to~$\mathbf{1}_{\{\omega_1\}}$. To verify this, we observe that
  Propo\-si\-tion~\ref{LoyWillis3.1}\romanref{LoyWillis3.1iv} implies
  that the function $g\colon [0,\omega_1]\to\mathbb{K}$ given by
  \[ g(\alpha) = \begin{cases} (T_1)_{\alpha, \omega_1} & \text{for}\
    \alpha\in[0,\omega_1)\\ \lim_{\gamma\to\omega_1}(T_1)_{\gamma,
      \omega_1} &\text{for}\ \alpha=\omega_1 \end{cases} \] is
  continuous, so $G=g\otimes \epsilon_{\omega_1}$ defines a
  finite-rank operator.  The number $c = (T_1)_{\omega_1, \omega_1}-
  g(\omega_1)$ is non-zero because $T_1\notin\mathscr{M}$, and the
  operator $T_2 = c^{-1}(T_1-G)$ satisfies $k^{T_2}_\beta = c^{-1}
  k^{T_1}_\beta$ for each $\beta\in[0,\omega_1)$ and
  $k^{T_2}_{\omega_1} = \mathbf{1}_{\{\omega_1\}}$.  The latter
  statement implies that $T_2\notin\mathscr{M}$, and
  $r^{T_2}_{\omega_1} = \mathbf{1}_{\{\omega_1\}}$ because
  $k^{T_1}_\beta$ vanishes eventually for each $\beta\in[0,\omega_1)$.

  We shall now inductively construct two transfinite sequences
  $(\eta_\sigma)_{\sigma\in[0,\omega_1)}$ and
  $(\xi_\sigma)_{\sigma\in[0,\omega_1)}$ of countable ordinals such
  that $\eta_\tau+\omega < \eta_\sigma$ and $\xi_\tau<\xi_\sigma$
  whenever $\tau<\sigma$. First, let $\eta_0=\xi_0=0$.  Next, assuming
  that the sequences $(\eta_\tau)_{\tau\in[0,\sigma)}$ and
  $(\xi_\tau)_{\tau\in[0,\sigma)}$ have been chosen for some
  $\sigma\in[1,\omega_1)$, we define
  \begin{equation}\label{defnetasigma} \eta_\sigma = \begin{cases}
      \displaystyle{\sup\biggl(\{\eta_\tau +
        \omega\}\cup\bigcup_{\beta\in[0,\xi_\tau]}\supp
        (k_\beta^{T_2})\biggr) + 1}
      &\text{for}\ \sigma = \tau+1,\ \text{where}\ \tau\in[0,\omega_1),\\
      \sup\{\eta_\tau: \tau\in[0,\sigma)\} &\text{for}\ \sigma\
      \text{a limit ordinal} \end{cases} \end{equation} and
  \begin{equation}\label{defnxisigma}  \xi_\sigma =
    \sup\biggl(\{\xi_\tau + 1 :
    \tau\in[0,\sigma)\}\cup\bigcup_{\alpha\in[0,\eta_\sigma+\omega]} 
    \supp(r_\alpha^{T_2})\biggr). \end{equation} It is clear that
  $\xi_\tau<\xi_\sigma$ for each $\tau<\sigma$, and also that
  $\eta_\tau+\omega < \eta_\sigma$ if $\sigma$ is a successor
  ordinal. On the other hand, if $\sigma$ is a limit ordinal, then
  $\tau<\sigma$ implies that $\tau+1<\sigma$, so $\eta_\tau+\omega <
  \eta_{\tau+1}\le \eta_\sigma$, as desired. Hence the induction
  continues.

  Let $T_3 = T_2\Phi_\Xi$, where $\Phi_\Xi$ is the composition
  operator associated with the trans\-finite sequence $\Xi =
  (\xi_\sigma)_{\sigma\in[0,\omega_1)}$ as in
  Lemma~\ref{thePhiLemma}\romanref{thePhiLemma3}. Using
  Lemma~\ref{thePhiLemma}\romanref{matrixPhiXi} and matrix
  multi\-pli\-ca\-tion, we see that
  $r^{T_3}_{\omega_1}=k^{T_3}_{\omega_1} =
  \mathbf{1}_{\{\omega_1\}}$. In fact, each of the rows of the matrix
  of~$T_3$ indexed by the set
  $H=\bigcup_{\sigma\in[1,\omega_1)}[\eta_\sigma,
  \eta_{\sigma}+\omega]\cup\{\omega_1\}$ has (at most) one-point
  support.  More precisely, since the sets defining~$H$ are pairwise
  disjoint, we can define a map $\theta\colon H\to[1,\omega_1]$ by
  \[ \theta(\alpha) = \begin{cases} \xi_\sigma &\text{for}\ \alpha\in
    [\eta_\sigma, \eta_{\sigma}+\omega],\ \text{where}\
    \sigma\in[1,\omega_1)\ \text{is  a successor ordinal,}\\
    \zeta_\sigma &\text{for}\ \alpha\in [\eta_\sigma,
    \eta_{\sigma}+\omega],\ \text{where}\ \sigma\in[1,\omega_1)\
    \text{is a limit ordinal,}\\
    \omega_1 &\text{for}\ \alpha=\omega_1, \end{cases} \] where
  $\zeta_\sigma = \sup\{\xi_\tau : \tau\in[0,\sigma)\}$ as in
  Lemma~\ref{thePhiLemma}, and we claim that
  \begin{equation}\label{supprT3} \supp(r_\alpha^{T_3})\subseteq
    \{\theta(\alpha)\}\qquad (\alpha\in H). \end{equation}
  This has already been verified for $\alpha = \omega_1$. 
  Otherwise $\alpha\in [\eta_\sigma, \eta_{\sigma}+\omega]$ for some
  $\sigma\in[1,\omega_1)$, and   $\omega_1\notin\supp(r_\alpha^{T_3})$
  because $k^{T_3}_{\omega_1} =
  \mathbf{1}_{\{\omega_1\}}$. Given $\gamma\in[0,\omega_1)$, matrix
  multiplication shows that
  \begin{equation*}
    (T_3)_{\alpha,\gamma} =
    \sum_{\beta\in[0,\omega_1]} 
    (T_2)_{\alpha,\beta} (\Phi_\Xi)_{\beta,\gamma} =
    \sum_{\beta\in[0,\xi_\sigma]} (T_2)_{\alpha,\beta}
    (\Phi_\Xi)_{\beta,\gamma} \end{equation*} because
  $\alpha\le\eta_{\sigma}+\omega$ implies that
  $\sup\supp(r_\alpha^{T_2})\le\xi_\sigma$
  by~\eqref{defnxisigma}, so that
  $(T_2)_{\alpha,\beta} = 0$  for \mbox{$\beta\in(\xi_\sigma,\omega_1]$}.  
  Now if $\sigma$ is a successor ordinal, say $\sigma = \tau+1$, then
  for each $\beta\in[0,\xi_\tau]$, we have
  $\sup\supp(k_\beta^{T_2})<\eta_\sigma\le\alpha$
  by~\eqref{defnetasigma}, so that $(T_2)_{\alpha,\beta} = 0$ for
  such~$\beta$, and hence
  \[ (T_3)_{\alpha,\gamma} = \sum_{\beta\in[\xi_\tau+1,\xi_{\tau+1}]}
  (T_2)_{\alpha,\beta}(\Phi_\Xi)_{\beta,\gamma} = \begin{cases}
    \displaystyle{\sum_{\beta\in[\xi_\tau+1,\xi_{\tau+1}]}
      (T_2)_{\alpha,\beta}}
    &\text{if}\ \gamma = \xi_{\tau+1} = \xi_\sigma = \theta(\alpha)\\
    \qquad0 &\text{otherwise} \end{cases} \] by
  Lemma~\ref{thePhiLemma}\romanref{matrixPhiXi}.  Otherwise $\sigma$
  is a limit ordinal, and for each $\beta\in[0,\zeta_\sigma)$, we can
  choose $\tau\in[0,\sigma)$ such that $\beta\le\xi_\tau$. Then
  $\sup\supp(k_\beta^{T_2})<\eta_{\tau+1}<\eta_\sigma\le\alpha$, so
  that $(T_2)_{\alpha,\beta} = 0$ for such~$\beta$, and as above we
  find that
  \[ (T_3)_{\alpha,\gamma} = \sum_{\beta\in[\zeta_\sigma,\xi_\sigma]}
  (T_2)_{\alpha,\beta}(\Phi_\Xi)_{\beta,\gamma} = \begin{cases}
    \displaystyle{\sum_{\beta\in[\zeta_\sigma,\xi_\sigma]}
      (T_2)_{\alpha,\beta}}
    &\text{if}\ \gamma = \zeta_\sigma = \theta(\alpha)\\
    \qquad0 &\text{otherwise.} \end{cases} \] This completes the proof
  of~\eqref{supprT3}.

  The set $H$ defined above is clearly uncountable. To prove that it
  is also closed, let $(\alpha_j)$ be a net in~$H$ converging to some
  $\alpha\in[0,\omega_1]$. Then, for each $\beta\in[0,\alpha)$, there
  is~$j_0$ such that $\beta<\alpha_j\le\alpha$ whenever $j\ge j_0$. In
  particular, we may suppose that $\alpha_j\le\alpha$ for
  each~$j$. Let $\sigma = \sup\{\tau\in[1,\omega_1) :
  \eta_\tau\le\alpha\}\in [1,\omega_1]$. If $\sigma = \omega_1$, then
  $\alpha\ge\sup\{\eta_\tau : \tau\in[0,\omega_1)\} = \omega_1$, so
  that $\alpha = \omega_1\in H$. Otherwise $\sigma$ is countable. The
  choice of~$\sigma$ implies that
  $\eta_\sigma\le\alpha<\eta_{\sigma+1}$.  (In the case where $\sigma$
  is a limit ordinal, the first inequality follows from the fact that
  $\eta_\sigma = \sup\{\eta_\tau : \tau\in[0,\sigma)\}$
  by~\eqref{defnetasigma}.)  Hence, for each~$j$, we have
  \[ \alpha_j\in H\cap[0,\alpha]\subseteq H\cap[0,\eta_{\sigma+1}) =
  \bigcup_{\tau\in[1,\sigma]} [\eta_\tau,\eta_\tau+\omega]\subseteq
  [0,\eta_\sigma+\omega], \] so $\eta_\sigma+\omega\ge \lim_j\alpha_j
  = \alpha$ and thus $\alpha\in [\eta_\sigma,\eta_\sigma
  +\omega]\subseteq H$, as desired.

  We can therefore associate with~$H$ the composition
  opera\-tor~$\Psi_H$ as in Lemma~\ref{anothercompop};
  Figure~\ref{fig2} sketches the matrix associated with $\Psi_H$.
  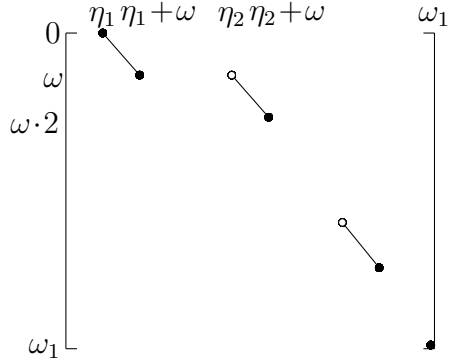
\begin{figure}[h]
    \begin{tikzpicture}[y=.2cm, x=.7cm,font=\sffamily,scale=0.7]
      {\color{white}\draw (0,0) -- coordinate (x axis mid) (10,0);}
      \draw (0,0) -- coordinate (y axis mid) (0,30); \draw (10,0) --
      coordinate (y axis mid) (10,30);
      \foreach \x in {8.5} \draw (\x,34.7) node[anchor=north] {};
      \foreach \x in {1} \draw (\x,33.5) node[anchor=north] {$\eta_1$};
      \foreach \x in {2.5} \draw (\x,34) node[anchor=north]
      {$\eta_1\!+\!\omega$}; \foreach \x in {4.5} \draw (\x,33.5)
      node[anchor=north] {$\eta_2$}; \foreach \x in {6} \draw (\x,34)
      node[anchor=north] {$\eta_2\!+\!\omega$}; \foreach \x in {10} \draw
      (\x,33.5) node[anchor=north] {$\omega_1$}; \foreach \y in {0}
      \draw (0pt,\y) -- (0.3,\y) node[anchor=east] {$\omega_1\,$};
      \draw (10,0) -- (9.7,0) node[anchor=west] {};
	
      \foreach \y in {30} \draw (0pt,\y) -- (0.3,\y) node[anchor=east]
      {$0\,$}; \draw (10,30) -- (9.7,30) node[anchor=west] {};
      \foreach \y in {25.5} \draw (-0.9,\y) node[anchor=west]
      {$\omega\,\,$}; \foreach \y in {21.5} \draw (-1.8,\y) 
      node[anchor=west] {$\omega\!\cdot\! 2\,\,$}; \foreach \y in {7.7}
      \draw (-1.2,\y) node[anchor=west] { };
      \draw plot[mark=*, mark options={fill=black}] (1, 30) -- (2,
      26); \draw plot[mark=*, mark options={fill=black}] (9.9, 0.35);
      \draw plot[mark=*, mark options={fill=white}] (4.5, 26) -- (5.5,
      22); \draw plot[mark=*] (5.5, 22); \draw plot[mark=, mark
      options={fill=black}] (7.5,12) -- (8.5, 7.7); \draw plot[mark=*,
      mark options={fill=white}] (7.5, 12); \draw plot[mark=, mark
      options={fill=black}] (8.5, 14); \draw plot[mark=*, mark
      options={fill=black}] (2, 26); \draw plot[mark=*, mark
      options={fill=black}] (8.5, 7.7);
    \end{tikzpicture}
    \caption{Structure of the matrix associated with $\Psi_H$.}\label{fig2}
  \end{figure}

  Let $T_4 = \Psi_H T_3$. Then, for $f\in C([0,\omega_1])$ and
  $\alpha\in[0,\omega_1]$, we have
  \begin{equation}\label{matrixT4new} (T_4f)(\alpha) =
    (T_3f)(\psi_H(\alpha)) = 
    \sum_{\beta\in[0,\omega_1]} (T_3)_{\psi_H(\alpha),\beta}f(\beta) =
    (T_3)_{\psi_H(\alpha),\gamma} f(\gamma) \end{equation}
  by~\eqref{supprT3}, where $\gamma =(\theta\circ\psi_H)(\alpha)$ and
  $\psi_H\colon[0,\omega_1]\to H$ denotes the order iso\-mor\-phism as
  in Lemma~\ref{anothercompop}.  Taking $\alpha = \omega_1$ and $f =
  \mathbf{1}_{[0,\omega_1]}$, we obtain
  $T_4(\mathbf{1}_{[0,\omega_1]})(\omega_1) =
  (T_3)_{\omega_1,\omega_1} = 1$.  
  Being continuous,  the function $T_4(\mathbf{1}_{[0,\omega_1]})$ is
  eventually constant,  so we can find a
  countable ordinal~$\chi$ such that 
  $T_4(\mathbf{1}_{[0,\omega_1]})(\alpha) = 1$ for each
  $\alpha\in[\chi,\omega_1]$. Moreover, \eqref{matrixT4new} implies
  that $\supp(r_\alpha^{T_4})\subseteq\{(\theta\circ\psi_H)(\alpha)\}$
  for each  $\alpha\in[0,\omega_1]$, hence we conclude  that
  \begin{equation}\label{alpharowofT4}
    (T_4)_{\alpha,\beta} =
    \delta_{(\theta\circ\psi_H)(\alpha),\beta}\qquad
    (\alpha\in[\chi,\omega_1],\,\beta\in[0,\omega_1]). \end{equation}
 
  Let $T_5 = QT_4$, where $Q = I-P_\chi +
  \mathbf{1}_{[0,\chi]}\otimes\epsilon_\chi$. An easy computation
  gives
  \[ Q_{\alpha,\beta} = \begin{cases} \delta_{\chi,\beta}
    &\text{for}\ \alpha\in[0,\chi]\\
    \delta_{\alpha,\beta} &\text{for}\
    \alpha\in(\chi,\omega_1] \end{cases}\qquad
  (\alpha,\beta\in[0,\omega_1]), \] which together
  with~\eqref{alpharowofT4} implies that
  \begin{equation}\label{T5matrix} (T_5)_{\alpha,\gamma} =
    \sum_{\beta\in[0,\omega_1]}
    Q_{\alpha,\beta}(T_4)_{\beta,\gamma} = \begin{cases}
      \delta_{(\theta\circ\psi_H)(\chi),\gamma} &\text{for}\
      \alpha\in[0,\chi]\\
      \delta_{(\theta\circ\psi_H)(\alpha),\gamma} &\text{for}\
      \alpha\in(\chi,\omega_1] \end{cases}\qquad
    (\alpha,\gamma\in[0,\omega_1]).  \end{equation}
  This shows in particular that $k_{\omega_1}^{T_5} =
  \mathbf{1}_{\{\omega_1\}}$, so 
  $T_5\notin\mathscr{M}$, and consequently
  the set \[ \Gamma =
  \{\gamma\in[0,\omega_1] :   k_\gamma^{T_5}\neq 0\} =
  (\theta\circ\psi_H)([\chi,\omega_1]) \] is   uncountable. Let $M =  
  (\mu_\sigma)_{\sigma\in[0,\omega_1]}$ be the increasing enumeration
  of~$\Gamma$.  We note that $\mu_0 = (\theta\circ\psi_H)(\chi)$ and
  $\mu_{\omega_1} = \omega_1$, and for each $\sigma\in[0,\omega_1]$,
  we have 
  \begin{equation}\label{eqT5indifunction}
    T_5(\mathbf{1}_{[0,\mu_\sigma]}) = 
    \mathbf{1}_{[0,\nu_\sigma]},\qquad \text{where}\qquad \nu_\sigma =
    \sup\{\alpha\in[0,\omega_1] :
    (\theta\circ\psi_H)(\alpha)\le\mu_\sigma\}. \end{equation} 
  The transfinite sequence $N =
  (\nu_\sigma)_{\sigma\in[0,\omega_1]}$ is  clearly increasing; 
  to see that it increases strictly, suppose that
  $0\le\tau<\sigma\le\omega_1$. Then $\mu_\tau<\mu_\sigma$. 
  On the one hand, since $\mu_\sigma\in\Gamma$, we have $\mu_\sigma =
  (\theta\circ\psi_H)(\alpha)$ for some   $\alpha\in[\chi,\omega_1]$,
  and therefore 
  \[ T_5(\mathbf{1}_{[\mu_\tau+1,\mu_\sigma]})(\alpha) =
  \sum_{\gamma\in [\mu_\tau+1,\mu_\sigma]} (T_5)_{\alpha,\gamma} =
  1 \] by \eqref{T5matrix}.  On the other,
  \eqref{eqT5indifunction}~implies that
  $T_5(\mathbf{1}_{[\mu_\tau+1,\mu_\sigma]}) =
  \mathbf{1}_{[0,\nu_\sigma]} - \mathbf{1}_{[0,\nu_\tau]}$. The only
  way that this function can take the value~$1$ at~$\alpha$ is if
  $\nu_\tau<\alpha\le\nu_\sigma$, and the conclusion follows.

  Lemma~\ref{thePhiLemma}\romanref{thePhiLemma1} implies that there
  are linear isometries $U_M$ and $U_N$ on~$C([0,\omega_1])$ such that
  $U_M(\mathbf{1}_{[0,\sigma]}) = \mathbf{1}_{[0,\mu_\sigma]}$ and
  $U_N(\mathbf{1}_{[0,\sigma]}) = \mathbf{1}_{[0,\nu_\sigma]}$ for
  each $\sigma\in[0,\omega_1]$. Moreover, their ranges are
  complemented in~$C([0,\omega_1])$ by
  Lemma~\ref{thePhiLemma}\romanref{thePhiLemma3}; the projection onto
  $U_N(C([0,\omega_1]))$ is~$\Phi_N$, and so we can define an operator
  $V_N = U_N^{-1}\Phi_N$ on $C([0,\omega_1])$.  We now see that $V_N
  T_5 U_M = I$ because $V_N T_5 U_M(\mathbf{1}_{[0,\sigma]}) =
  \mathbf{1}_{[0,\sigma]}$ for each $\sigma\in[0,\omega_1]$, and the
  result follows.
\end{proof}

\begin{remark} Ogden \cite{Og} extended the definition of
  $\mathscr{M}$ to the case of $\mathscr{B}(C([0,\omega_\eta]))$,
  where $\eta$ is any ordinal such that $\omega_\eta$ is a regular
  cardinal. Our main result is valid also in this case with a similar
  proof. \end{remark}

\section{Operators with separable range: the proof of
  Theorem~\ref{thmseprange}}\label{secion4}

\noindent
We require four lemmas. The first is straightforward, so we omit its
proof.

\begin{lemma}\label{lemma28june1}
  Let $K$ be a compact topological space, and let
  $(f_\alpha)_{\alpha\in[0,\omega_1)}$ be a family of pairwise
  disjointly supported func\-tions in~$C(K)$ such that
  $\sup\{\|f_\alpha\| : \alpha\in[0,\omega_1)\} < \infty$ and
  $\inf\{\|f_\alpha\| : \alpha\in[0,\omega_1)\} > 0$. Then
  $(f_\alpha)_{\alpha\in[0,\omega_1)}$ is a transfinite basic sequence
  equivalent to the canonical Schauder basis
  $(\mathbf{1}_{\{\alpha\}})_{\alpha\in[0,\omega_1)}$
  for~$c_0(\omega_1)$.
\end{lemma}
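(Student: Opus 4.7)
The plan is to reduce everything to a pointwise sup\nobreakdash-norm computation exploiting the disjointness of supports. Set $m=\inf_\alpha \|f_\alpha\|>0$ and $M=\sup_\alpha \|f_\alpha\|<\infty$, both finite and positive by hypothesis.

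First I would observe that, for any finite linear combination $g=\sum_{j=1}^n c_j f_{\alpha_j}$ with distinct indices $\alpha_1,\ldots,\alpha_n\in[0,\omega_1)$, the disjointness of $\supp(f_{\alpha_j})$ forces, at each point $k\in K$,
\[ |g(k)|\in \{0\}\cup \{|c_j|\,|f_{\alpha_j}(k)|:1\le j\le n\}, \]
since at most one of the $f_{\alpha_j}$ is non\nobreakdash-zero at $k$. Taking the supremum over $k\in K$ and using that each $f_{\alpha_j}$ attains (or approaches) $\|f_{\alpha_j}\|$ on its own support gives the clean identity
\[ \biggl\|\sum_{j=1}^n c_j f_{\alpha_j}\biggr\| = \max_{1\le j\le n} |c_j|\,\|f_{\alpha_j}\|. \]
Combined with the uniform bounds on the $\|f_\alpha\|$, this yields
\[ m\max_{j}|c_j| \le \biggl\|\sum_{j=1}^n c_j f_{\alpha_j}\biggr\| \le M\max_{j}|c_j|, \]
which is precisely the statement that $(f_\alpha)$ is equivalent, with constants depending only on $m$ and $M$, to the canonical basis $(\mathbf{1}_{\{\alpha\}})_{\alpha\in[0,\omega_1)}$ of $c_0(\omega_1)$ on the dense subspace $\lin\{f_\alpha:\alpha\in[0,\omega_1)\}$.

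Second, the same identity certifies the transfinite basic sequence property. For any finite set $F\subseteq[0,\omega_1)$, any scalars $(c_\alpha)_{\alpha\in F}$ and any $\beta\in[0,\omega_1)$, the candidate partial\nobreakdash-sum projection satisfies
\[ \biggl\|\sum_{\alpha\in F\cap[0,\beta]} c_\alpha f_\alpha\biggr\|
= \max_{\alpha\in F\cap[0,\beta]}|c_\alpha|\,\|f_\alpha\|
\le \frac{M}{m}\biggl\|\sum_{\alpha\in F}c_\alpha f_\alpha\biggr\|, \]
so the basis constant is at most $M/m$. The conclusion then passes from $\lin\{f_\alpha\}$ to its closed linear span by continuity, and the equivalence to the $c_0(\omega_1)$\nobreakdash-basis makes the correspondence with elements of $c_0(\omega_1)$ automatic.

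There is no serious obstacle here: the whole argument is a single pointwise observation about disjointly supported continuous functions, and the equivalence constants fall out of the uniform bounds on $\|f_\alpha\|$. This is why the authors treat the lemma as straightforward and omit the proof.
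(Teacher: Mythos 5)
Your argument is correct: the exact identity $\bigl\|\sum_j c_j f_{\alpha_j}\bigr\| = \max_j |c_j|\,\|f_{\alpha_j}\|$ for disjointly supported functions immediately gives both the equivalence to the canonical basis of $c_0(\omega_1)$ (with constants $m$ and $M$) and the uniform bound $M/m$ on the initial-segment projections. The paper omits the proof precisely because this is the intended straightforward computation, so your proposal matches the authors' implicit argument.
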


\begin{lemma}\label{lemma12sept}
  A subspace $X$ of~$C([0,\omega_1])$ is separable if and only if $X$
  is contained in the range of the projection~$\widetilde{P}_\sigma$
  given by~\eqref{defnPtilde} for some countable ordinal~$\sigma$.
\end{lemma}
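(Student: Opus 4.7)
The statement is an equivalence, so I would prove the two implications separately; both are short, with the main work done by the well-known fact (recalled in Section~\ref{section2}) that a scalar-valued continuous function on $[0,\omega_1]$ is eventually constant.

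\textbf{Implication $(\Leftarrow)$.} I would first describe the range of $\widetilde{P}_\sigma$ explicitly. A direct computation from~\eqref{defnPtilde} shows that for each $f\in C([0,\omega_1])$,
\[
(\widetilde{P}_\sigma f)(\alpha) = \begin{cases} f(\alpha) & \text{for }\alpha\in[0,\sigma],\\ f(\omega_1) & \text{for }\alpha\in[\sigma+1,\omega_1],\end{cases}
\]
so the range of $\widetilde{P}_\sigma$ is contained in the linear span of $P_\sigma(C([0,\omega_1]))$ (which is isometrically isomorphic to $C([0,\sigma])$) together with the single element $\mathbf{1}_{[\sigma+1,\omega_1]}$. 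Since $\sigma$ is countable, $[0,\sigma]$ is a compact metrizable space, hence $C([0,\sigma])$ is separable, and therefore so is the range of $\widetilde{P}_\sigma$. Any subspace of a separable space is separable, so $X$ is separable.

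\textbf{Implication $(\Rightarrow)$.} Given a separable subspace $X$, I would pick a countable dense subset $\{f_n : n\in\mathbb{N}\}$ of~$X$. By the well-known fact recalled at the end of Section~\ref{section2}, each~$f_n$ is eventually constant, so there is a countable ordinal~$\sigma_n$ such that $f_n(\alpha) = f_n(\omega_1)$ for every $\alpha\in[\sigma_n,\omega_1]$. Set $\sigma = \sup_{n\in\mathbb{N}} \sigma_n$, which is countable because it is a countable supremum of countable ordinals. For this common~$\sigma$, each $f_n$ satisfies $\widetilde{P}_\sigma f_n = f_n$: the values on $[0,\sigma]$ agree by definition, while on $[\sigma+1,\omega_1]$ the function $f_n$ is constantly equal to $f_n(\omega_1)$ because $\sigma+1 > \sigma_n$. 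Hence $\{f_n : n\in\mathbb{N}\}\subseteq \widetilde{P}_\sigma(C([0,\omega_1]))$, and since $\widetilde{P}_\sigma$ is a projection its range is closed, so we conclude that $X \subseteq \widetilde{P}_\sigma(C([0,\omega_1]))$ by taking closures.

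Neither direction presents a real obstacle; the only point requiring a little care is observing that the supremum in the construction of~$\sigma$ stays below~$\omega_1$, which is immediate from the uncountable cofinality of~$\omega_1$.
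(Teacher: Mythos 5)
Your proof is correct and follows essentially the same route as the paper: the backward direction by noting the range of $\widetilde{P}_\sigma$ is separable, and the forward direction by choosing a countable dense subset, using that continuous functions on $[0,\omega_1]$ are eventually constant to find a common countable ordinal~$\sigma$, and then invoking the closedness of the range of~$\widetilde{P}_\sigma$. The extra details you supply (the explicit formula for $\widetilde{P}_\sigma f$ and the cofinality remark) are fine but not a departure from the paper's argument.
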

\begin{proof}
  The implication $\Leftarrow$ is clear.  Conversely, suppose that $W$
  is a countable dense subset of~$X$.  Since each continuous function
  on~$[0,\omega_1]$ is eventually constant, we can choose a countable
  ordinal~$\sigma$ such that $f|_{[\sigma+1,\omega_1]}$ is constant
  for each $f\in W$. This implies that $\widetilde{P}_\sigma f = f$
  for each $f\in W$, so as $\widetilde{P}_\sigma$ has closed range and
  $W$ is dense in~$X$, we conclude that $X\subseteq
  \widetilde{P}_\sigma(C([0,\omega_1]))$.
\end{proof}

\begin{lemma}\label{lemma28june2}
  Let $T$ be an operator on~$C([0,\omega_1])$ such that
  $T\ne\widetilde{P}_\sigma T\widetilde{P}_\sigma$ for each countable
  ordinal~$\sigma$.  Then there is an $\epsilon>0$ such that, for each
  countable ordinal~$\xi$, there is a function $f\in C([0,\omega_1])$
  with $\supp(f)\subseteq (\xi,\omega_1)$ satisfying $\|f\|\le 1$ and
  $\|Tf\|\ge\epsilon$.
\end{lemma}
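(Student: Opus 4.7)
The plan is to argue by contraposition. Suppose no such $\epsilon$ exists, so for every $n\in\mathbb{N}$ one can choose a countable ordinal $\xi_n$ such that $\|Tf\|<1/n$ for every $f\in C([0,\omega_1])$ with $\|f\|\leq 1$ and $\supp(f)\subseteq(\xi_n,\omega_1)$. The supremum $\xi=\sup_n\xi_n$ is again countable, and by homogeneity and letting $n\to\infty$ one obtains that $Tg=0$ for \emph{every} continuous function $g$ with $\supp(g)\subseteq(\xi,\omega_1)$. The aim is then to produce a countable ordinal $\sigma'$ with $T=\widetilde{P}_{\sigma'}T\widetilde{P}_{\sigma'}$, contradicting the hypothesis on $T$.

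The first step is to establish $T=T\widetilde{P}_\xi$. Given $f\in C([0,\omega_1])$, set $g=f-\widetilde{P}_\xi f$. Using the explicit formula $\widetilde{P}_\xi=P_\xi+\mathbf{1}_{[\xi+1,\omega_1]}\otimes\epsilon_{\omega_1}$, one checks that $g$ is continuous, vanishes on $[0,\xi]$ (where $\widetilde{P}_\xi$ agrees with the identity), and satisfies $g(\omega_1)=f(\omega_1)-f(\omega_1)=0$. Hence $\supp(g)\subseteq(\xi,\omega_1)$, so by the previous paragraph $Tg=0$, which rearranges to $Tf=T\widetilde{P}_\xi f$.

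The second step is to promote this to $\widetilde{P}_\sigma T=T$ for some countable ordinal $\sigma$. Since $[0,\xi]$ is metrizable, the range of $\widetilde{P}_\xi$ is separable, hence so is the subspace $T\widetilde{P}_\xi(C([0,\omega_1]))=T(C([0,\omega_1]))$ of $C([0,\omega_1])$. Lemma~\ref{lemma12sept} then supplies a countable $\sigma$ with $\widetilde{P}_\sigma T\widetilde{P}_\xi = T\widetilde{P}_\xi$, and combining with $T=T\widetilde{P}_\xi$ yields $\widetilde{P}_\sigma T=T$. Setting $\sigma'=\max\{\sigma,\xi\}$ and checking the routine absorption relations $\widetilde{P}_{\sigma'}\widetilde{P}_\sigma=\widetilde{P}_\sigma$ and $\widetilde{P}_\xi\widetilde{P}_{\sigma'}=\widetilde{P}_\xi$ finally gives $T=\widetilde{P}_{\sigma'}T\widetilde{P}_{\sigma'}$, the desired contradiction. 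The main obstacle is not difficulty but bookkeeping: one must verify that $f-\widetilde{P}_\xi f$ vanishes at $\omega_1$ as well as on $[0,\xi]$, so that its support genuinely lies in $(\xi,\omega_1)$ and the "$Tg=0$" conclusion applies. The rank-one perturbation in the definition of $\widetilde{P}_\sigma$ is precisely what enforces vanishing at $\omega_1$, which is why the lemma is formulated with $\widetilde{P}_\sigma$ rather than $P_\sigma$.
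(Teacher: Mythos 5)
Your proposal is correct and follows essentially the same route as the paper: contraposition, taking $\xi=\sup_n\xi_n$, showing $T=T\widetilde{P}_\xi$ via the observation that $(I-\widetilde{P}_\xi)f$ has support in $(\xi,\omega_1)$ (the vanishing at $\omega_1$ being exactly the point of using $\widetilde{P}_\xi$ rather than $P_\xi$), and then using separability of the range together with Lemma~\ref{lemma12sept} and the absorption identity $\widetilde{P}_\alpha\widetilde{P}_\beta=\widetilde{P}_{\min\{\alpha,\beta\}}$ to conclude $T=\widetilde{P}_{\sigma'}T\widetilde{P}_{\sigma'}$. This matches the paper's argument in all essentials.
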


\begin{proof}[Proof by contraposition] Suppose that the conclusion is
  false. Then, taking \mbox{$\epsilon = 1/n$} for $n\in\mathbb{N}$, we
  obtain a sequence $(\xi_n)_{n\in\mathbb{N}}$ of countable ordinals
  such that \mbox{$\|Tf\|<1/n$} for each function $f\in
  C([0,\omega_1])$ with $\supp(f)\subseteq (\xi_n,\omega_1)$ and $\|
  f\|\le 1$.  

  We claim that the countable ordinal \mbox{$\xi = \sup\{\xi_n :
    n\in\mathbb{N}\}$} satisfies $T=T\widetilde{P}_\xi$. To verify
  this claim, it clearly suffices to prove that
  $T(I-\widetilde{P}_\xi)g = 0$ for each $g\in C([0,\omega_1])$ with
  $\|(I-\widetilde{P}_\xi)g\|\le 1$.  Letting $f =
  (I-\widetilde{P}_\xi)g$, we have $\supp(f)\subseteq (\xi,\omega_1) =
  \bigcap_{n\in\mathbb{N}} (\xi_n,\omega_1)$ because
  $P_\xi(I-\widetilde{P}_\xi) = 0$ and $f(\omega_1) = 0$. Hence the
  choice of~$\xi_n$ implies that $\|Tf\|< 1/n$ for each
  $n\in\mathbb{N}$, so $0 = Tf = T(I-\widetilde{P}_\xi)g$, and the
  claim follows.

  In particular, $T$ has separable range, so Lemma~\ref{lemma12sept}
  implies that $T = \widetilde{P}_\eta T$ for some countable
  ordinal~$\eta$.  Since $\widetilde{P}_\alpha\widetilde{P}_\beta =
  \widetilde{P}_{\min\{\alpha,\beta\}}$, we conclude that $T =
  \widetilde{P}_\sigma T\widetilde{P}_\sigma$ is satisfied for $\sigma
  = \max\{\xi,\eta\}$.
\end{proof}

\begin{lemma}\label{lemma28june3}
  Let $S$ be an operator on~$C([0,\omega_1])$ with $k_{\omega_1}^S =
  0$. For each pair $\zeta,\eta$ of countable ordinals, there is a
  countable ordinal $\xi\ge\zeta$ such that $P_\eta S(I-P_\xi) = 0$.
\end{lemma}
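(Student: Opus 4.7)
The plan is to translate the operator equation $P_\eta S(I-P_\xi)=0$ into a vanishing condition on matrix entries of~$S$, and then appeal to the countable support of rows (Proposition~\ref{LoyWillis3.1}\romanref{LoyWillis3.1i}) together with the uncountable cofinality of~$\omega_1$.

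First I would unpack the matrices involved. Since $P_\eta$ and $I-P_\xi$ are multiplication by the (continuous) indicator functions $\mathbf{1}_{[0,\eta]}$ and $\mathbf{1}_{[\xi+1,\omega_1]}$, their matrices are diagonal: $(P_\eta)_{\alpha,\beta}=\delta_{\alpha,\beta}$ for $\alpha\in[0,\eta]$ and zero otherwise, while $(I-P_\xi)_{\alpha,\beta}=\delta_{\alpha,\beta}$ for $\alpha\in(\xi,\omega_1]$ and zero otherwise. A direct application of~\eqref{matrixmult} then yields
\[ (P_\eta S(I-P_\xi))_{\alpha,\beta}=\begin{cases} S_{\alpha,\beta} & \text{if}\ \alpha\in[0,\eta]\ \text{and}\ \beta\in(\xi,\omega_1],\\ 0 & \text{otherwise},\end{cases} \]
so the desired equation $P_\eta S(I-P_\xi)=0$ is equivalent to the assertion that $S_{\alpha,\beta}=0$ for every $\alpha\in[0,\eta]$ and every $\beta\in(\xi,\omega_1]$.

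The hypothesis $k_{\omega_1}^S=0$ handles $\beta=\omega_1$ for free, so only entries with $\beta\in(\xi,\omega_1)$ remain. By Proposition~\ref{LoyWillis3.1}\romanref{LoyWillis3.1i}, each row $r_\alpha^S$ has countable support; moreover $[0,\eta]$ is countable because $\eta$ is, so
\[ A=\bigcup_{\alpha\in[0,\eta]}\bigl(\supp(r_\alpha^S)\cap[0,\omega_1)\bigr) \]
is a countable union of countable sets, hence a countable subset of $[0,\omega_1)$, and $\sup A<\omega_1$ by the uncountable cofinality of~$\omega_1$. Taking $\xi=\max\{\sup A,\zeta\}$ produces a countable ordinal $\ge\zeta$ with $S_{\alpha,\beta}=0$ whenever $\alpha\in[0,\eta]$ and $\beta>\xi$, which is exactly what is needed.

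There is no real obstacle here: the lemma is a bookkeeping argument combining row-summability with the regularity of~$\omega_1$. The one point that deserves explicit mention is that the hypothesis $k_{\omega_1}^S=0$ is precisely what is required in order to discard $\omega_1$ from~$A$ without leaving behind any bad entries in the final column.
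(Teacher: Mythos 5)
Your proof is correct and follows essentially the same route as the paper: both take $\xi$ to be (at least) the supremum of $\zeta$ together with the supports of the rows $r^S_\alpha$ for $\alpha\in[0,\eta]$, use the countable support of rows plus the regularity of $\omega_1$ to keep $\xi$ countable, and use $k^S_{\omega_1}=0$ to dispose of the final column; the matrix computation identifying $(P_\eta S(I-P_\xi))_{\alpha,\beta}$ with $S_{\alpha,\beta}$ on $[0,\eta]\times(\xi,\omega_1]$ is likewise the paper's argument.
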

\begin{proof}
  Let $\xi = \sup\bigl(\{\zeta\}\cup\bigcup_{\alpha\in[0,\eta]}\supp
  (r^S_\alpha)\bigr)$. Then clearly $\zeta\le\xi$, and $\xi$ is
  countable because $\supp(r^S_\alpha)$ is countable and
  $S_{\alpha,\omega_1} = 0$ for each~$\alpha$. We show that
  \mbox{$P_\eta S(I-P_\xi) = 0$} by verifying that $(P_\eta
  S(I-P_\xi))_{\alpha,\delta} = 0$ for each pair $\alpha,
  \delta\in[0,\omega_1]$. Indeed, by~\eqref{matrixmult}, we have
  \[ (P_\eta S(I-P_\xi))_{\alpha,\delta} =
  \sum_{\beta,\gamma\in[0,\omega_1]}
  (P_\eta)_{\alpha,\beta}S_{\beta,\gamma} (I -
  P_\xi)_{\gamma,\delta}= \begin{cases} 0 &\text{if}\
    \alpha\in(\eta,\omega_1],\\
    0 &\text{if}\ \delta\in[0,\xi],\\
    S_{\alpha,\delta} &\text{otherwise,} \end{cases} \] and
  $S_{\alpha,\delta} = 0$ for $\alpha\in[0,\eta]$ and
  $\delta\in(\xi,\omega_1]$ by the choice of~$\xi$.
\end{proof}

\begin{proof}[Proof of Theorem~\ref{thmseprange}.] The implications
  \alphref{thmseprange2e}$\Rightarrow$\alphref{thmseprange2g}%
  $\Rightarrow$\alphref{thmseprange2f}$\Rightarrow$%
  \alphref{thmseprange2a}$\Rightarrow$\alphref{thmseprange2d} are all
  straight\-forward. Indeed,
  \alphref{thmseprange2e}$\Rightarrow$\alphref{thmseprange2g} because
  $\widetilde{P}_\sigma$ is a rank-one perturbation of~$P_\sigma$,
  whose range is iso\-metrically isomorphic to~$C([0,\sigma])$;
  \alphref{thmseprange2g}$\Rightarrow$\alphref{thmseprange2f} is
  obvious; \alphref{thmseprange2f}$\Rightarrow$\alphref{thmseprange2a}
  follows from the facts that~$C([0,\sigma])$ is separable and
  $\mathscr{X}(C([0,\omega_1]))$ is a closed operator ideal; and
  \alphref{thmseprange2a}$\Rightarrow$\alphref{thmseprange2d}
  because~$c_0(\omega_1)$ is non-separable.
 
  Finally, we prove that
  \alphref{thmseprange2d}$\Rightarrow$\alphref{thmseprange2e} by
  contraposition. Suppose that $T\ne \widetilde{P}_\sigma
  T\widetilde{P}_\sigma$ for each countable ordinal~$\sigma$.  If
  $T\notin\mathscr{M}$, then Theorem~\ref{coorfreecharofLWideal}
  implies that $T$ fixes a copy of~$C([0,\omega_1])$ and thus
  of~$c_0(\omega_1)$. Otherwise choose $\epsilon>0$ as in
  Lemma~\ref{lemma28june2}. By in\-duc\-tion, we shall construct a
  family $(f_\alpha)_{\alpha\in[0,\omega_1)}$ of functions in
  $C([0,\omega_1])$ such that \mbox{$\sup\{\|f_\alpha\| :
    \alpha\in[0,\omega_1)\}\le 1$}, \mbox{$\inf\{\|Tf_\alpha\| :
    \alpha\in[0,\omega_1)\}\ge\epsilon$}, $f_0(\omega_1) = 0$,
  $Tf_0(\omega_1)=0$ and
  \begin{equation}\label{eq1June29}
    \supp(f_\alpha)\subseteq (\sup\supp(f_\beta),\omega_1)\qquad
    \text{and}\qquad \supp(Tf_\alpha)\subseteq (\sup\supp
    (Tf_\beta),\omega_1) \end{equation}
  whenever $0\le\beta<\alpha<\omega_1$. Before  giving the details of
  this construction, let us explain how it enables us to complete the
  proof. The families $(f_\alpha)_{\alpha\in[0,\omega_1)}$ and
  $(Tf_\alpha)_{\alpha\in[0,\omega_1)}$ both satisfy the conditions in 
  Lemma~\ref{lemma28june1}, so they are equivalent to the  canonical
  Schauder basis for~$c_0(\omega_1)$. Hence, as $T$ maps
  $(f_\alpha)_{\alpha\in[0,\omega_1)}$
  on\-to~$(Tf_\alpha)_{\alpha\in[0,\omega_1)}$, it fixes a copy
  of~$c_0(\omega_1)$.  
 
  It remains to inductively construct
  $(f_\alpha)_{\alpha\in[0,\omega_1)}$. To start the in\-duc\-tion, we
  note that $\xi = \sup(\supp(r_{\omega_1}^T)\setminus\{\omega_1\})$
  is a countable ordinal by
  Propo\-si\-tion~\ref{LoyWillis3.1}\romanref{LoyWillis3.1i}.  Lemma
  \ref{lemma28june2} there\-fore enables us to choose a function
  $f_0\in C([0,\omega_1])$ with \mbox{$\supp(f_0)\subseteq
    (\xi,\omega_1)$} such that $\|f_0\|\le 1$ and
  $\|Tf_0\|\ge\epsilon$. Of the conditions that~$f_0$ must satisfy,
  only $Tf_0(\omega_1)=0$ is not evident; however, we have
  \begin{equation}\label{proof1.3eq} Tf_0(\omega_1) =
    \sum_{\beta\in[0,\omega_1]} T_{\omega_1,\beta}f_0(\beta) = 0 \end{equation}
  because $f_0(\beta) = 0$ for
  $\beta\in[0,\xi]\cup\{\omega_1\}$, while $T_{\omega_1,\beta} = 0$ for
  $\beta\in(\xi,\omega_1)$ by the choice of~$\xi$.

  Now let $\alpha\in(0,\omega_1)$, and assume inductively that
  functions $(f_\beta)_{\beta\in[0,\alpha)}$ in $C([0,\omega_1])$ have
  been chosen as specified. The func\-tion~$k^T_{\omega_1}$ is
  continuous because $T\in\mathscr{M}$, so we may define a rank-one
  operator by $F = k^T_{\omega_1}\otimes \epsilon_{\omega_1}$. Since
  $k^{T-F}_{\omega_1} = 0$, we can apply Lemma~\ref{lemma28june3} with
  \[ \zeta = \sup\biggl((\supp
  (r_{\omega_1}^T)\setminus\{\omega_1\})\cup
  \bigcup_{\beta\in[0,\alpha)}\supp(f_\beta)\biggr)\qquad
  \text{and}\qquad \eta =
  \sup\biggl(\bigcup_{\beta\in[0,\alpha)}\supp(Tf_\beta)\biggr) \] to
  obtain a countable ordinal $\xi\ge\zeta$ such that
  $P_\eta(T-F)(I-P_\xi) = 0$. (Note that the ordinals $\zeta$ and
  $\eta$ are countable because $f_\beta$ and $Tf_\beta$ are continuous
  functions on~$[0,\omega_1]$ mapping~$\omega_1$ to~$0$, so they have
  countable supports for each $\beta\in[0,\alpha)$.) By
  Lemma~\ref{lemma28june2}, we can take a function $f_\alpha\in
  C([0,\omega_1])$ with $\supp(f_\alpha)\subseteq (\xi,\omega_1)$ such
  that $\|f_\alpha\|\le 1$ and $\|Tf_\alpha\|\ge\epsilon$. It remains
  to check that~\eqref{eq1June29} holds for each
  $\beta\in[0,\alpha)$. The first statement is clear because
  $\supp(f_\alpha)\subseteq (\xi,\omega_1)$ and
  $\sup\supp(f_\beta)\le\zeta\le\xi$. To verify the second, we observe
  that $Tf_\alpha(\omega_1) = 0$ by an argument similar to that given
  in~\eqref{proof1.3eq} above. Moreover, since $f_\alpha\in\ker P_\xi$
  and $f_\alpha\in\ker\epsilon_{\omega_1} = \ker F$, we have
  \[ P_\eta Tf_\alpha = P_\eta (T-F)(I-P_\xi)f_\alpha = 0. \]
  Consequently, $\supp(Tf_\alpha)\subseteq (\eta,\omega_1)$, from
  which the desired conclusion follows because
  $\sup\supp(Tf_\beta)\le\eta$. Hence the induction continues.
\end{proof}

\section{The lattice of closed ideals of
  $\mathscr{B}(C([0,\omega_1]))$}\label{section5ideallattice}
\noindent
The aim of this section is to establish the hierarchy among the closed
ideals of $\mathscr{B}(C([0,\omega_1]))$ shown in
Figure~\ref{diagramClosedIdeals}.  Beginning from the bottom of the
diagram, we note that as $C([0,\omega_1])$ is a
$\mathscr{L}_\infty$-space, it has the bounded approximation property,
so $\mathscr{K}(C([0,\omega_1]))$ is the closure of the ideal of
finite-rank operators and thus the minimum non-zero closed ideal.

To prove the minimality of the next two inclusions in
Figure~\ref{diagramClosedIdeals}, we require the following variant of
Sobczyk's theorem for~$C([0,\omega_1])$, which is due to Argyros
\emph{et al}.
\begin{proposition}[{\cite[Proposition~3.2]{argyrosetal}}]%
\label{argyrosetalProp3.2}
Let $X$ be a subspace of $C([0,\omega_1])$ which is isomorphic to
either~$c_0$ or~$c_0(\omega_1)$. Then $X$ is automatically
complemented.
\end{proposition}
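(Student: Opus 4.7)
The plan is to handle the two isomorphism types separately. The case $X \cong c_0$ is a straightforward reduction to the classical Sobczyk theorem, while the case $X \cong c_0(\omega_1)$ requires additional work exploiting the scattered structure of~$[0,\omega_1]$.

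For $X \cong c_0$: since $X$ is separable, Lemma~\ref{lemma12sept} provides a countable ordinal~$\sigma$ with $X \subseteq \widetilde{P}_\sigma(C([0,\omega_1]))$. The latter space is separable (being a rank-one perturbation of $P_\sigma(C([0,\omega_1])) \cong C([0,\sigma])$), so Sobczyk's classical theorem yields a bounded projection~$Q$ from $\widetilde{P}_\sigma(C([0,\omega_1]))$ onto~$X$. Then $P = Q\widetilde{P}_\sigma$ is a projection on~$C([0,\omega_1])$ with range~$X$, because $\widetilde{P}_\sigma$ fixes every element of~$X$.

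For $X \cong c_0(\omega_1)$: fix an isomorphism $T\colon c_0(\omega_1)\to X$ and set $x_\alpha = Te_\alpha$ for $\alpha\in[0,\omega_1)$, where $(e_\alpha)$ is the canonical basis. A sign-averaging argument using $\|\sum_{\alpha\in F}\pm e_\alpha\|=1$ for each finite~$F$ shows that $\sum_\alpha |x_\alpha(\omega_1)|\le\|T\|$, so after discarding countably many indices---which affects neither the isomorphism type nor the complementation question, since subspaces of $c_0(\omega_1)$ spanned by co-countable subsets of the canonical basis are complemented---we may assume $x_\alpha(\omega_1)=0$ for every~$\alpha$. Each $x_\alpha$ is then supported on an initial segment $[0,\xi_\alpha]$ with $\xi_\alpha$ countable. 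I would then carry out a transfinite gliding-hump construction, in the spirit of Section~\ref{sectionpfofmainthm}, to extract a subfamily $(x_{\alpha_\sigma})_{\sigma\in[0,\omega_1)}$ whose supports are pairwise disjoint. By Lemma~\ref{lemma28june1}, this subfamily is equivalent to the canonical basis of~$c_0(\omega_1)$, and its closed linear span~$Y$ is complemented in~$C([0,\omega_1])$ via the natural biorthogonal projection afforded by the disjoint supports.

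The main obstacle is promoting complementation of~$Y$ to complementation of~$X$ itself. For this, I would use the fact that regular Borel measures on the scattered space $[0,\omega_1]$ are purely atomic, so Hahn--Banach extensions $\phi_\alpha\in C([0,\omega_1])^*$ of the biorthogonal functionals on~$X$ automatically satisfy that $\alpha\mapsto\phi_\alpha(f)$ has countable support for each $f\in C([0,\omega_1])$. Combined with uniform boundedness of~$(\phi_\alpha)$, this suggests defining $Pf = \sum_\alpha \phi_\alpha(f)\,x_\alpha$, provided one can verify that $\{\alpha : |\phi_\alpha(f)|\ge\varepsilon\}$ is in fact \emph{finite} for each $\varepsilon>0$. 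Upgrading countability to finiteness is the hardest technical point and most likely requires a careful choice of the Hahn--Banach extensions, or an alternative gliding-hump argument producing disjointly supported functionals adapted to the~$x_\alpha$.
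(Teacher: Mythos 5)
Your $c_0$ case is correct and is exactly the argument the paper gives in the Remark following the statement (Lemma~\ref{lemma12sept} plus Sobczyk's theorem plus complementedness of $\widetilde{P}_\sigma(C([0,\omega_1]))$). Note, however, that the paper does not prove the $c_0(\omega_1)$ case at all: it is quoted from \cite[Proposition~3.2]{argyrosetal}, and that is precisely the part where your proposal has a genuine gap --- indeed, you concede as much in your last paragraph, so what you have is a plan, not a proof.

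Concretely, two steps fail or are unjustified. First, the claim that the closed linear span~$Y$ of a disjointly supported, normalized family $(x_{\alpha_\sigma})_{\sigma\in[0,\omega_1)}$ is complemented ``via the natural biorthogonal projection afforded by the disjoint supports'' is false as stated: the obvious candidate $f\mapsto\sum_\sigma \bigl(f(t_\sigma)/x_{\alpha_\sigma}(t_\sigma)\bigr)x_{\alpha_\sigma}$, with $t_\sigma\in\supp(x_{\alpha_\sigma})$, is not even well defined, because for a general $f\in C([0,\omega_1])$ (already for $f=\mathbf{1}_{[0,\omega_1]}$) the coefficient family $(f(t_\sigma))_\sigma$ need not lie in $c_0(\omega_1)$, so the sum does not converge. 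Disjoint supports give you equivalence with the $c_0(\omega_1)$-basis (Lemma~\ref{lemma28june1}), but complementation of such a copy is exactly the non-trivial content of the Argyros--Castillo--Granero--Jim\'{e}nez--Moreno result (compare: $c_0$ sits uncomplemented in $\ell_\infty=C(\beta\mathbb{N})$, so some special feature of $[0,\omega_1]$ must be used); your final paragraph, which is where this would have to be proved, explicitly leaves the key point (finiteness of $\{\alpha:|\phi_\alpha(f)|\ge\varepsilon\}$, i.e.\ boundedness of a $c_0(\omega_1)$-valued coefficient map) unresolved, and countability of supports of atomic measures does not yield it. Second, smaller issues: discarding the countably many indices with $x_\alpha(\omega_1)\neq 0$ does change the complementation problem --- complementing the co-countable piece does not by itself complement $X$ (this can be repaired: write $X=X'\oplus W$ with $W=(I-P')(X'')$ separable and inside $\ker P'$, complement $W$ by your $c_0$ case, and add the projections --- but you must say this); and exact pairwise disjointness of supports cannot in general be extracted, only approximate disjointness followed by a perturbation argument, which for an uncountable transfinite basic family also needs justification.
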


\begin{remark}
  The first part of Proposition~\ref{argyrosetalProp3.2} follows
  easily from our results and Sobczyk's theorem. Indeed, let $X$ be a
  subspace of $C([0,\omega_1])$ which is isomorphic to~$c_0$. Then $X$
  is separable, hence contained in
  $\widetilde{P}_\sigma(C([0,\omega_1]))$ for some countable
  ordinal~$\sigma$ by Lemma~\ref{lemma12sept}.  Sobczyk's theorem
  implies that $X$ is complemented in
  $\widetilde{P}_\sigma(C([0,\omega_1]))$, and as
  $\widetilde{P}_\sigma(C([0,\omega_1]))$ is complemented
  in~$C([0,\omega_1])$, so is~$X$.
\end{remark}

\begin{proposition}
  The identity operator on~$c_0$ factors through each non-compact
  operator on~$C([0,\omega_1])$.  Hence no closed ideal of
  $\mathscr{B}(C([0,\omega_1]))$ lies strictly
  between~$\mathscr{K}(C([0,\omega_1]))$ and
  $\overline{\mathscr{G}}_{c_0}(C([0,\omega_1]))$.
\end{proposition}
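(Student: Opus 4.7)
The plan is to show that the identity on~$c_0$ factors through every non-compact operator $T$ on $C([0,\omega_1])$, and then to deduce the minimality of the step from $\mathscr{K}$ to $\overline{\mathscr{G}}_{c_0}$ by a routine ideal-theoretic argument. The crux will be to produce a subspace of $C([0,\omega_1])$ isomorphic to~$c_0$ on which~$T$ is bounded below; Proposition~\ref{argyrosetalProp3.2} will then supply the required complementation and the factorisation will drop out.

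To locate such a subspace, I would first observe that $[0,\omega_1]$ is scattered, so $C([0,\omega_1])$ contains no isomorphic copy of~$\ell_1$.  Rosenthal's $\ell_1$-theorem therefore guarantees that every bounded sequence in $C([0,\omega_1])$ has a weakly Cauchy subsequence.  Combining this with the Dunford--Pettis property of $C(K)$-spaces (applied to the weakly null successive differences of such a subsequence) forces every weakly compact operator on $C([0,\omega_1])$ to be compact, so our non-compact~$T$ is not weakly compact either.  Pe{\l}czy\'nski's theorem \cite{pel} then yields a subspace $Z\subseteq C([0,\omega_1])$ isomorphic to~$c_0$ on which~$T$ is an isomorphism onto its image.

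Proposition~\ref{argyrosetalProp3.2} now supplies a bounded projection $P\in\mathscr{B}(C([0,\omega_1]))$ onto the subspace~$T(Z)$, itself a copy of~$c_0$ in $C([0,\omega_1])$.  Fixing an isomorphism $\phi\colon c_0\to Z$ and setting $R=\phi$ and $S=\phi^{-1}(T|_Z)^{-1}P$, we have $PT\phi=T\phi$ because $T\phi$ already takes values in~$T(Z)$, so $STR=\phi^{-1}(T|_Z)^{-1}PT\phi=I_{c_0}$; hence the identity on~$c_0$ factors through~$T$.

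For the second assertion I would argue by contradiction.  Suppose some closed ideal~$\mathscr{I}$ satisfies $\mathscr{K}(C([0,\omega_1]))\subsetneq\mathscr{I}\subsetneq\overline{\mathscr{G}}_{c_0}(C([0,\omega_1]))$, and pick a non-compact $T\in\mathscr{I}$.  Any operator of the form $BA$ with $A\in\mathscr{B}(C([0,\omega_1]),c_0)$ and $B\in\mathscr{B}(c_0,C([0,\omega_1]))$ equals $BI_{c_0}A=(BS)\,T\,(RA)$ by the factorisation just established and therefore lies in the two-sided ideal generated by~$T$, which is contained in~$\mathscr{I}$.  Hence $\mathscr{G}_{c_0}(C([0,\omega_1]))\subseteq\mathscr{I}$, and taking norm closures contradicts the strict inclusion $\mathscr{I}\subsetneq\overline{\mathscr{G}}_{c_0}(C([0,\omega_1]))$.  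The main non-routine ingredient is the upgrade of ``non-compact'' to ``non-weakly-compact'' on $C([0,\omega_1])$ via scatteredness together with the Dunford--Pettis property; once that is in hand, Pe{\l}czy\'nski's theorem and Proposition~\ref{argyrosetalProp3.2} do all the remaining work.
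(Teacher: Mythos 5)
Your proposal is correct and follows essentially the same route as the paper: upgrade non-compact to non-weakly-compact on $C([0,\omega_1])$, invoke Pe{\l}czy\'nski's theorem to get a fixed copy of $c_0$, and use Proposition~\ref{argyrosetalProp3.2} to obtain the complementation that yields the factorization and the ideal-theoretic conclusion. The only deviation is the sub-step showing weakly compact operators on $C([0,\omega_1])$ are compact, which you establish via scatteredness, Rosenthal's $\ell_1$-theorem and the Dunford--Pettis property, whereas the paper uses the Schur property of $C([0,\omega_1])^*\cong\ell_1([0,\omega_1])$ together with the theorems of Gantmacher and Schauder; both arguments are standard and valid.
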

\begin{proof} 
  This is a standard argument which we outline for
  completeness. Since $[0,\omega_1]$ is scattered,
  $C([0,\omega_1])^*\cong \ell_1([0,\omega_1])$, so
  $C([0,\omega_1])^*$ has the Schur property. Hence all weakly compact
  operators on~$C([0,\omega_1])^*$ are compact. The theorems of Gantmacher and
  Schauder then imply that all weakly compact operators
  on~$C([0,\omega_1])$ are compact, and therefore, by a theorem of
  Pe{\l}czy{\a'n}ski, each non-compact operator on~$C([0,\omega_1])$
  fixes a copy of~$c_0$. Now the conclusion follows from
  Proposition~\ref{argyrosetalProp3.2}.
\end{proof}

For each countable ordinal~$\alpha$, let $Q_\alpha$ denote the
$\alpha^{\text{th}}$ projection associated with the canonical Schauder
basis $(\mathbf{1}_{\{\beta\}})_{\beta\in[0,\omega_1)}$
for~$c_0(\omega_1)$; that is, $(Q_\alpha f)(\beta) = f(\beta)$ for
$\beta\in[0,\alpha]$ and $(Q_\alpha f)(\beta) = 0$ for
$\beta\in(\alpha,\omega_1)$. We can use the projections~$Q_\alpha$ to
characterize the separable subspaces of~$c_0(\omega_1)$ in a similar
fashion to Lemma~\ref{lemma12sept} for $C([0,\omega_1])$.  Although
this characterization follows easily from standard results such
as~\cite[Proposition~5.6]{hmvz}, we outline a short, elementary proof.

\begin{lemma}\label{sepsubspaceofc0omega1}
  A subspace $X$ of~$c_0(\omega_1)$ is separable if and only if $X$ is
  contained in the range of the projection $Q_\alpha$ for some
  countable ordinal~$\alpha$.
\end{lemma}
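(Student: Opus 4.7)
The implication $\Leftarrow$ is immediate because the range of $Q_\alpha$ is isometrically isomorphic to~$c_0([0,\alpha])$, which is separable for each countable ordinal~$\alpha$.

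For the converse, the plan is to mimic the argument of Lemma~\ref{lemma12sept}. Suppose that $W$ is a countable dense subset of~$X$. The key observation is that each $f\in c_0(\omega_1)$ has countable support, since $\supp(f) = \bigcup_{n\in\mathbb{N}}\{\beta\in[0,\omega_1) : |f(\beta)|\ge 1/n\}$ and each of the sets in this union is finite by the defining property of~$c_0(\omega_1)$.

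Consequently, the set $\bigcup_{f\in W}\supp(f)$ is a countable union of countable sets, hence countable, so we may choose a countable ordinal~$\alpha$ with $\supp(f)\subseteq[0,\alpha]$ for each $f\in W$. Then $Q_\alpha f = f$ for each $f\in W$, and since $Q_\alpha$ has closed range and $W$ is dense in~$X$, we conclude that $X\subseteq Q_\alpha(c_0(\omega_1))$. There is no real obstacle here; the proof is a direct transcription of the $C([0,\omega_1])$ argument, with ``eventually constant'' replaced by ``countably supported''.
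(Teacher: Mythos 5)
Your proof is correct and follows essentially the same route as the paper: both directions use the same observations, namely that $Q_\alpha$ has separable range, and that countable supports of the elements of a countable dense subset $W$ yield a countable ordinal $\alpha$ with $Q_\alpha f = f$ on $W$, whence $X$ lies in the closed range of $Q_\alpha$. No gaps; nothing further is needed.
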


\begin{proof}
The implication $\Leftarrow$ is immediate because $Q_\alpha$ has
separable range for each $\alpha\in[0,\omega_1)$. 

Conversely, suppose that $X$ is separable, and let $W$ be a dense,
countable subset of~$X$. Since each element of~$c_0(\omega_1)$ has
countable support, the ordinal $\alpha = \sup\bigcup_{f\in W}\supp f$
is countable, and clearly $Q_\alpha f = f$ for each $f\in W$.  Hence
$W$ is contained in the range of~$Q_\alpha$, which is closed, so the
same is true for~$X$.
\end{proof}

\begin{proposition} 
  No closed ideal of $\mathscr{B}(C([0,\omega_1]))$ lies strictly
  between $\overline{\mathscr{G}}_{c_0}(C([0,\omega_1]))$ and
  $\overline{\mathscr{G}}_{c_0(\omega_1)}(C([0,\omega_1]))$.
\end{proposition}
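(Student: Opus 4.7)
The plan is to take any $T \in \mathscr{I} \setminus \overline{\mathscr{G}}_{c_0}(C([0,\omega_1]))$ and prove that the closed two-sided ideal of $\mathscr{B}(C([0,\omega_1]))$ generated by~$T$ contains the entire ideal $\mathscr{G}_{c_0(\omega_1)}(C([0,\omega_1]))$; upon taking closures, this forces $\mathscr{I} = \overline{\mathscr{G}}_{c_0(\omega_1)}(C([0,\omega_1]))$. The pivotal intermediate claim is that every such~$T$ must fix a copy of~$c_0(\omega_1)$ inside $C([0,\omega_1])$.

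To establish this claim I would argue by contraposition. Suppose $T$ does not fix $c_0(\omega_1)$. Theorem~\ref{thmseprange} then supplies a countable ordinal~$\sigma$ with $T = \widetilde{P}_\sigma T \widetilde{P}_\sigma$, so in particular $T$ has separable range. Writing $T$ as a norm-limit of operators of the form $B_n A_n$ with $A_n \in \mathscr{B}(C([0,\omega_1]), c_0(\omega_1))$ and $B_n \in \mathscr{B}(c_0(\omega_1), C([0,\omega_1]))$, the product $\widetilde{P}_\sigma B_n A_n \widetilde{P}_\sigma$ still converges to~$T$. Because $\widetilde{P}_\sigma(C([0,\omega_1]))$ is separable, $A_n \widetilde{P}_\sigma$ has separable image in~$c_0(\omega_1)$, so by Lemma~\ref{sepsubspaceofc0omega1} that image lies inside $Q_{\alpha_n}(c_0(\omega_1))$ for some countable~$\alpha_n$. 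Since $Q_{\alpha_n}(c_0(\omega_1))$ is isomorphic to~$c_0$ (its index set being countably infinite), the factorization $\widetilde{P}_\sigma B_n A_n \widetilde{P}_\sigma = (\widetilde{P}_\sigma B_n Q_{\alpha_n})(A_n \widetilde{P}_\sigma)$ exhibits each approximant as belonging to $\mathscr{G}_{c_0}(C([0,\omega_1]))$. Hence $T \in \overline{\mathscr{G}}_{c_0}(C([0,\omega_1]))$, contradicting the choice of~$T$.

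Once we know that~$T$ fixes some copy $Y \cong c_0(\omega_1)$ inside $C([0,\omega_1])$, Proposition~\ref{argyrosetalProp3.2} provides bounded projections $\Pi$ onto~$Y$ and $\Pi'$ onto~$T(Y)$ (noting that $T(Y)$ is again a copy of~$c_0(\omega_1)$). Fix an isomorphism $J \colon c_0(\omega_1) \to Y$, so that $TJ \colon c_0(\omega_1) \to T(Y)$ is also an isomorphism. For an arbitrary $S = BA \in \mathscr{G}_{c_0(\omega_1)}(C([0,\omega_1]))$, setting $V = JA$ and $U = B(TJ)^{-1}\Pi'$ gives operators in $\mathscr{B}(C([0,\omega_1]))$ with $UTV = B(TJ)^{-1}\Pi' TJA = BA = S$, placing~$S$ in the ideal generated by~$T$. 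The main obstacle is the contrapositive of the intermediate claim: turning mere norm-approximability by $c_0(\omega_1)$-factorizations into genuine $c_0$-factorizations of the localized operators $\widetilde{P}_\sigma B_n A_n \widetilde{P}_\sigma$ is the heart of the matter and forces us to combine the structural output of Theorem~\ref{thmseprange} with Lemma~\ref{sepsubspaceofc0omega1}; the concluding complementation argument is then formal.
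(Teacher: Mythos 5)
Your proposal is correct and follows essentially the same route as the paper: the dichotomy via Theorem~\ref{thmseprange} (separable range, i.e.\ $T=\widetilde{P}_\sigma T\widetilde{P}_\sigma$, versus fixing a copy of $c_0(\omega_1)$), the localization of approximating $c_0(\omega_1)$-factorizations through the projections $Q_\alpha$ of Lemma~\ref{sepsubspaceofc0omega1} to obtain membership in $\overline{\mathscr{G}}_{c_0}(C([0,\omega_1]))$, and Proposition~\ref{argyrosetalProp3.2} to complement $T(Y)$ and generate all of $\overline{\mathscr{G}}_{c_0(\omega_1)}(C([0,\omega_1]))$. The only differences are cosmetic (two-sided rather than one-sided multiplication by $\widetilde{P}_\sigma$, and spelling out the standard factorization argument the paper leaves implicit).
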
 
\begin{proof}
  Given $T\in\overline{\mathscr{G}}_{c_0(\omega_1)}(C([0,\omega_1]))$,
  we consider two cases.  If $T\in\mathscr{X}(C([0,\omega_1]))$, then
  Theorem~\ref{thmseprange} shows that $T = \widetilde{P}_\sigma
  T\widetilde{P}_\sigma$ for some countable ordinal~$\sigma$. Given
  $\epsilon>0$, choose operators $U\colon C([0,\omega_1])\to
  c_0(\omega_1)$ and $V\colon c_0(\omega_1)\to C([0,\omega_1])$ such
  that $\|T - VU\|\le\epsilon$. Since the range of the operator
  $U\widetilde{P}_\sigma$ is separable, we can take a countable
  ordinal~$\alpha$ such that $Q_\alpha U\widetilde{P}_\sigma =
  U\widetilde{P}_\sigma$ by Lemma~\ref{sepsubspaceofc0omega1}. Hence
  we have
  \[ \| T - VQ_\alpha U\widetilde{P}_\sigma\| = \|
  T\widetilde{P}_\sigma - VU\widetilde{P}_\sigma\|\le \| T- VU\|\,
  \|\widetilde{P}_\sigma\|\le \epsilon, \] so
  $T\in\overline{\mathscr{G}}_{c_0}(C([0,\omega_1]))$ because
  $Q_\alpha\in\mathscr{G}_{c_0}(c_0(\omega_1))$.

  Otherwise $T\notin\mathscr{X}(C([0,\omega_1]))$, and
  Theorem~\ref{thmseprange} implies that $T$ fixes a copy~$X$
  of~$c_0(\omega_1)$. Proposition~\ref{argyrosetalProp3.2} ensures
  that $T(X)$ is complemented in~$C([0,\omega_1])$, so the closed
  ideal generated by~$T$ is equal
  to~$\overline{\mathscr{G}}_{c_0(\omega_1)}(C([0,\omega_1]))$.
\end{proof}

To complete Figure~\ref{diagramClosedIdeals}, we require the
\emph{Szlenk index} as it enables us to dis\-tin\-guish the
$C(K)$-spaces considered therein.  This ordinal-valued index, denoted
by~$\Sz X$, was originally intro\-duced by Szlenk~\cite{szlenk} for
Banach spaces~$X$ with separable dual and has subsequently been
generalized to encompass all Asp\-lund spaces (or all Banach spaces,
provided that one is willing to accept that $\Sz X$ takes the value
`un\-defined' (or~$\infty$) if $X$ is not an Asp\-lund space).  We
shall not state the definition of the Szlenk index here as all we need
to know is its value for certain $C(K)$-spaces. The interested reader
is referred to~\cite[Section~2.4]{hmvz} for a modern introduction to
the Szlenk index. 

A proof of the first part of the following theorem is outlined
in~\cite[Exercise~8.55]{fhhmz}, while the second, much deeper, part is
due to Samuel~\cite{samuel}; a simplified proof of it, due to
H\'{a}jek and Lancien, can be found in~\cite{hl}
or~\cite[Theorem~2.59]{hmvz}.
\begin{theorem}\label{SzlenkCK}
  \begin{romanenumerate}
  \item\label{SzlenkCK2} The Szlenk index of~$c_0(\omega_1)$
    is~$\omega$.
  \item\label{SzlenkCK1} 
 Let $\alpha$
    be a countable ordinal. Then $C([0,\omega^{\omega^\alpha}])$ has
    Szlenk index~$\omega^{\alpha+1}$.
  \end{romanenumerate}
\end{theorem}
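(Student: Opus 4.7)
The plan is to handle the two parts separately. Recall that for $\epsilon>0$, $s_\epsilon(K)$ denotes the set of $x^*\in K$ whose every weak-$^*$ neighbourhood meets~$K$ in a set of $\|\cdot\|$-diameter~$>\epsilon$; this operation is iterated transfinitely, and $\Sz(X,\epsilon)$ is the least ordinal~$\beta$ with $s_\epsilon^\beta(B_{X^*})=\emptyset$, while $\Sz(X)=\sup_{\epsilon>0}\Sz(X,\epsilon)$.

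For~\romanref{SzlenkCK2} I would argue via the isometric identification $c_0(\omega_1)^*\cong\ell_1(\omega_1)$. The key lemma is the norm-contraction estimate
\[ s_\epsilon\bigl(B_{\ell_1(\omega_1)}\bigr)\subseteq(1-\epsilon/2)B_{\ell_1(\omega_1)}. \]
To prove it, take $x^*\in s_\epsilon(B_{\ell_1(\omega_1)})$; for each $\delta>0$ a suitable weak-$^*$ neighbourhood of~$x^*$, determined by evaluation against finitely many coordinate functionals, must contain two elements $y^*,z^*\in B_{\ell_1(\omega_1)}$ with $\|y^*-z^*\|_1>\epsilon-\delta$. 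Approximating $x^*$ on those finitely many coordinates and applying the triangle inequality gives $\|x^*\|_1\le 1-(\epsilon-\delta)/2$. Iterating yields $s_\epsilon^n(B_{\ell_1(\omega_1)})\subseteq(1-n\epsilon/2)B_{\ell_1(\omega_1)}$, which is empty for $n>2/\epsilon$; hence $\Sz(c_0(\omega_1),\epsilon)<\omega$ for each $\epsilon>0$, so $\Sz(c_0(\omega_1))\le\omega$. The reverse inequality is automatic: $c_0(\omega_1)$ is infinite-dimensional, so no finite iterate of slicings can empty the dual unit ball.

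For~\romanref{SzlenkCK1} I would follow Samuel's strategy as simplified by H\'ajek and Lancien, linking the Szlenk index of $C(K)$ for scattered~$K$ to the Cantor--Bendixson derivatives of~$K$. Set $K=[0,\omega^{\omega^\alpha}]$; then $K$ is scattered, $C(K)^*\cong\ell_1(K)$ via the point-evaluation basis, and ordinal arithmetic gives $K^{(\omega^\alpha)}=\{\omega^{\omega^\alpha}\}$ while $K^{(\omega^\alpha+1)}=\emptyset$. The central inductive claim to establish is a two-sided comparison between the slicings $s_\epsilon^{\gamma}(B_{\ell_1(K)})$ and the set of normalised measures supported on $K^{(\beta)}$, for $\beta$ ranging over $[0,\omega^\alpha]$ and $\gamma$ advancing through an explicit ordinal schedule that accumulates to~$\omega^{\alpha+1}$. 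The successor step mimics the $\ell_1$-contraction argument of~\romanref{SzlenkCK2}, so that finitely many further slicings peel off the isolated points of $K^{(\beta)}\setminus K^{(\beta+1)}$; the limit step is handled by taking weak-$^*$ intersections and exploiting upper semicontinuity of slicing under such intersections. Once the estimate is known at $\beta=\omega^\alpha$, one further pass eliminates the singleton measure at $\omega^{\omega^\alpha}$, yielding the upper bound $\Sz(C(K))\le\omega^{\alpha+1}$. The matching lower bound is obtained by constructing, for each $\gamma<\omega^{\alpha+1}$, explicit normalised convex combinations of Dirac masses concentrated at points of high Cantor--Bendixson rank that witness $s_\epsilon^\gamma(B_{\ell_1(K)})\ne\emptyset$ for suitable~$\epsilon>0$.

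The main obstacle is~\romanref{SzlenkCK1}: the transfinite induction requires precise tracking of how the constants in the sandwich estimate degrade through the successor steps, and the ordinal scheduling of slicings must be aligned with the Cantor normal forms of the intermediate ordinals so as to produce exactly $\omega^{\alpha+1}$ rather than a strictly smaller or larger value. Since complete and streamlined proofs already appear in~\cite{samuel} and~\cite{hl} (with a polished exposition in~\cite[Theorem~2.59]{hmvz}), I would defer to those sources for the full technical bookkeeping rather than reproduce it here.
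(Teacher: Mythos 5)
The paper does not prove this theorem at all: it quotes part (i) from \cite[Exercise~8.55]{fhhmz} and part (ii) from Samuel \cite{samuel}, via the simplified proof of H\'ajek and Lancien \cite{hl} (see also \cite[Theorem~2.59]{hmvz}), so your deferral to precisely these sources is the same approach as the paper's. Your $\ell_1(\omega_1)$-flattening argument for (i) is the standard correct one (the additive estimate $s_\epsilon\bigl(rB_{\ell_1(\omega_1)}\bigr)\subseteq (r-\epsilon/2)B_{\ell_1(\omega_1)}$ does iterate as claimed, and $\Sz X\ge\omega$ for any infinite-dimensional $X$ is standard), while your sketch for (ii) is only heuristic --- but it ends exactly where the paper ends, namely in the citation, so nothing is lost.
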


In fact, a Szlenk index can be associated with each operator between
Banach spaces in such a way that the Szlenk index of a Banach space is
equal to that of its identity operator.  We are interested in this
notion because Brooker~\cite[Theorem~2.2]{brooker} has shown that, for
each ordinal~$\alpha$, the
collection~$\mathscr{S}\!\mathscr{Z}_\alpha$ of operators having
Szlenk index at most~$\omega^\alpha$ forms a closed operator ideal in
the sense of Pietsch.

Armed with this information, we can prove that all inclusions are
proper in each of the two infinite ascending chains in
Figure~\ref{diagramClosedIdeals}.

\begin{proposition}\label{propinfinitechains}
  Let $\alpha$ be a countable ordinal, and let
  $K_\alpha = [0,\omega^{\omega^\alpha}]$. Then:
  \begin{romanenumerate}
  \item\label{propinfinitechains1}
    $\overline{\mathscr{G}}_{C(K_{\alpha})}(C([0,\omega_1]))\subsetneq
    \overline{\mathscr{G}}_{C(K_\alpha)\oplus
      c_0(\omega_1)}(C([0,\omega_1]));$
  \item\label{propinfinitechains2}
    $\overline{\mathscr{G}}_{C(K_{\alpha})}(C([0,\omega_1]))\subsetneq
    \overline{\mathscr{G}}_{C(K_{\alpha+1})}(C([0,\omega_1]));$
  \item\label{propinfinitechains3}
    $\overline{\mathscr{G}}_{C(K_{\alpha})\oplus
      c_0(\omega_1)}(C([0,\omega_1]))\subsetneq
    \overline{\mathscr{G}}_{C(K_{\alpha+1})\oplus
      c_0(\omega_1)}(C([0,\omega_1]));$
  \item\label{propinfinitechains4}
    $\overline{\mathscr{G}}_{c_0(\omega_1)}(C([0,\omega_1]))\subsetneq
    \overline{\mathscr{G}}_{C(K_1)\oplus
      c_0(\omega_1)}(C([0,\omega_1]))$.
  \end{romanenumerate}
\end{proposition}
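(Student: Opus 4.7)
The unifying tool for (ii), (iii), and (iv) is the Szlenk index together with Brooker's theorem, which guarantees that $\mathscr{S}\!\mathscr{Z}_\beta(C([0,\omega_1]))$ is a closed operator ideal for every ordinal~$\beta$. The plan is to exploit the standard fact that $\Sz(RS)\le\Sz(Z)$ whenever $RS$ factors through a Banach space~$Z$, which gives the inclusion $\overline{\mathscr{G}}_Z(C([0,\omega_1]))\subseteq\mathscr{S}\!\mathscr{Z}_\beta(C([0,\omega_1]))$ whenever $\Sz(Z)\le\omega^\beta$. Combining Theorem~\ref{SzlenkCK} with the easy identity $\Sz(Y\oplus Z)=\max\{\Sz Y,\Sz Z\}$, one obtains $\Sz(C(K_\alpha)\oplus c_0(\omega_1))=\omega^{\alpha+1}$ and $\Sz(C(K_{\alpha+1}))=\omega^{\alpha+2}$.

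To exhibit witnesses in (ii)--(iv), I would use the projections $\widetilde{P}_\sigma$ defined in~\eqref{defnPtilde}. For $\sigma=\omega^{\omega^{\alpha+1}}$, the range of $\widetilde{P}_\sigma$ is isomorphic to $C([0,\sigma])\oplus\mathbb{K}\cong C(K_{\alpha+1})$, so $\widetilde{P}_\sigma$ lies in $\mathscr{G}_{C(K_{\alpha+1})}(C([0,\omega_1]))$, hence in both $\overline{\mathscr{G}}_{C(K_{\alpha+1})}(C([0,\omega_1]))$ and $\overline{\mathscr{G}}_{C(K_{\alpha+1})\oplus c_0(\omega_1)}(C([0,\omega_1]))$. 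On the other hand, $\widetilde{P}_\sigma$ acts as the identity on its range, so $\Sz(\widetilde{P}_\sigma)=\omega^{\alpha+2}$, and the Szlenk bound above keeps it out of the ideals $\overline{\mathscr{G}}_{C(K_\alpha)}(C([0,\omega_1]))$ and $\overline{\mathscr{G}}_{C(K_\alpha)\oplus c_0(\omega_1)}(C([0,\omega_1]))$. This disposes of (ii) and (iii) simultaneously. For (iv), the same idea works with $\sigma=\omega^\omega$: then $\widetilde{P}_\sigma\in\mathscr{G}_{C(K_1)}\subseteq\overline{\mathscr{G}}_{C(K_1)\oplus c_0(\omega_1)}$ has Szlenk index $\omega^2>\omega=\Sz(c_0(\omega_1))$, and so it is not in $\overline{\mathscr{G}}_{c_0(\omega_1)}(C([0,\omega_1]))$.

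Part (i) requires a different separating operator, since the ambient Szlenk indices agree. Here the key observation is that $\overline{\mathscr{G}}_{C(K_\alpha)}(C([0,\omega_1]))\subseteq\mathscr{X}(C([0,\omega_1]))$, because $C(K_\alpha)$ is separable and $\mathscr{X}(C([0,\omega_1]))$ is a closed ideal. The plan is to exhibit an idempotent~$E\in\mathscr{G}_{c_0(\omega_1)}(C([0,\omega_1]))$ with non-separable range. For this I would use the family of indicator functions $\mathbf{1}_{\{\alpha+1\}}$ of the isolated successor points in $[0,\omega_1)$; these are continuous, pairwise disjointly supported, and of norm one, so by Lemma~\ref{lemma28june1} they span a closed copy of~$c_0(\omega_1)$ inside $C([0,\omega_1])$. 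By Proposition~\ref{argyrosetalProp3.2} this copy is complemented, so there is a bounded projection~$E$ onto it. Then $E$ factors through~$c_0(\omega_1)$ in the obvious way and therefore lies in $\overline{\mathscr{G}}_{C(K_\alpha)\oplus c_0(\omega_1)}(C([0,\omega_1]))$, while its range is non-separable and hence $E\notin\mathscr{X}(C([0,\omega_1]))\supseteq\overline{\mathscr{G}}_{C(K_\alpha)}(C([0,\omega_1]))$.

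The only genuine obstacle is verifying the Szlenk-index statements that underlie the second paragraph: namely that the Szlenk index of an operator is monotone under factorization (so $\Sz(\mathscr{G}_Z)\le\Sz(Z)$), that it behaves well on projections (so $\Sz(\widetilde{P}_\sigma)$ equals $\Sz$ of its range), and that $\Sz(Y\oplus Z)=\max\{\Sz Y,\Sz Z\}$. These are all standard in the Szlenk-index literature referenced in~\cite{brooker,hmvz}, and are the places to be careful; the remainder of the argument is then simply a matter of plugging in the values from Theorem~\ref{SzlenkCK}.
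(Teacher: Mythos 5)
Your proposal is correct and follows essentially the same route as the paper: parts (ii)--(iv) are separated by the projection with range isomorphic to $C(K_{\alpha+1})$ (the paper uses $P_{\omega^{\omega^{\alpha+1}}}$, you use $\widetilde{P}_{\omega^{\omega^{\alpha+1}}}$), excluded from the smaller ideals via Brooker's closed ideal $\mathscr{S}\!\mathscr{Z}_{\alpha+1}$ together with $\Sz\bigl(C(K_\alpha)\oplus c_0(\omega_1)\bigr)=\omega^{\alpha+1}$, and part (i) by a projection onto a complemented copy of $c_0(\omega_1)$, whose range is non-separable. The only cosmetic differences are that the paper phrases the Szlenk exclusion as ``the identity on $C(K_{\alpha+1})$ factors through the witness'' rather than assigning the witness a Szlenk index, and deduces (iv) from (iii) with $\alpha=0$ instead of rerunning the argument.
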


\begin{proof}
  To prove~\romanref{propinfinitechains1}, let~$Q$ be a projection on
  $C([0,\omega_1])$ whose range is isomorphic to~$c_0(\omega_1)$. Then
  $Q\in \overline{\mathscr{G}}_{C(K_\alpha)\oplus
    c_0(\omega_1)}(C([0,\omega_1]))$, but $Q\notin
  \overline{\mathscr{G}}_{C(K_{\alpha})}(C([0,\omega_1]))$ because its
  range is non-separable.

  We shall prove~\romanref{propinfinitechains2}
  and~\romanref{propinfinitechains3} simultaneously by displaying an
  operator belonging to
  $\overline{\mathscr{G}}_{C(K_{\alpha+1})}(C([0,\omega_1]))%
  \setminus \overline{\mathscr{G}}_{C(K_{\alpha})\oplus
    c_0(\omega_1)}(C([0,\omega_1]))$.  More precisely, we claim that
  the pro\-jec\-tion~$P_\sigma$ is such an operator for $\sigma =
  \omega^{\omega^{\alpha+1}}$.  Indeed,
  $P_\sigma\in\overline{\mathscr{G}}_{C(K_{\alpha+1})}%
  (C([0,\omega_1]))$ because its range is isometrically isomorphic
  to~$C(K_{\alpha+1})$.  On the other hand,
  Theorem~\ref{SzlenkCK}\romanref{SzlenkCK1} implies that the identity
  operator on~$C(K_{\alpha+1})$ does not belong to the operator ideal
  $\mathscr{S}\!\mathscr{Z}_{\alpha+1}$. Since it factors
  through~$P_\sigma$, we deduce that
  $P_\sigma\notin\mathscr{S}\!\mathscr{Z}_{\alpha+1}(C([0,\omega_1]))$,
  and consequently we have $P_\sigma\notin
  \overline{\mathscr{G}}_{C(K_{\alpha})\oplus
    c_0(\omega_1)}(C([0,\omega_1]))$ because
  $\overline{\mathscr{G}}_{C(K_{\alpha})\oplus
    c_0(\omega_1)}\subseteq\mathscr{S}\!\mathscr{Z}_{\alpha+1}$ as the
  following calculation shows
  \[ \Sz C(K_\alpha)\oplus c_0(\omega_1) = \max\{\Sz C(K_\alpha), \Sz
  c_0(\omega_1)\} = \omega^{\alpha+1}. \] Here, the first equality
  follows from~\cite[Proposition~1.5(v)]{brooker} (which in turn is a
  consequence of~\cite[Equation~(2.3)]{hl}) and the second from
  Theorem~\ref{SzlenkCK}.

  Finally, \romanref{propinfinitechains4} follows by taking $\alpha =
  0$ in~\romanref{propinfinitechains3} because $C(K_0) =
  C([0,\omega])\cong c_0$, so that $C(K_0)\oplus c_0(\omega_1)\cong
  c_0(\omega_1)$.
\end{proof}

We now come to the most interesting result in this section.

\begin{theorem}\label{XplusGandM}
  The ideal $\mathscr{X}(C([0,\omega_1])) +
  \overline{\mathscr{G}}_{c_0(\omega_1)}(C([0,\omega_1]))$ is closed, 
  and 
  \[ \mathscr{X}(C([0,\omega_1]))\subsetneq
  \mathscr{X}(C([0,\omega_1])) +
  \overline{\mathscr{G}}_{c_0(\omega_1)}(C([0,\omega_1]))%
  \subsetneq\mathscr{M}. \]
\end{theorem}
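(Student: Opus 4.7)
The plan is to break the statement into three parts: (i) the containment $\mathscr{X}+\overline{\mathscr{G}}_{c_0(\omega_1)}\subseteq\mathscr{M}$, (ii) the two strict inclusions, and (iii) closedness of the sum ideal. The strategy relies on Theorem~\ref{thmseprange} to control~$\mathscr{X}$ via the projections $\widetilde{P}_\sigma$, Theorem~\ref{coorfreecharofLWideal} to test membership in~$\mathscr{M}$ via factorization of the identity, and Brooker's closed Szlenk-index ideals $\mathscr{S}\!\mathscr{Z}_\alpha$ together with the Szlenk values in Theorem~\ref{SzlenkCK} to separate~$\mathscr{M}$ from the sum ideal.

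For~(i), since~$\mathscr{M}$ is a closed ideal it suffices to verify that each summand is contained in~$\mathscr{M}$. For $\mathscr{X}\subseteq\mathscr{M}$: by Theorem~\ref{thmseprange}\alphref{thmseprange2e} any $T\in\mathscr{X}$ satisfies $T=\widetilde{P}_\sigma T\widetilde{P}_\sigma$ for some countable~$\sigma$, and a matrix computation using $(\widetilde{P}_\sigma)_{\alpha,\omega_1}=\mathbf{1}_{[\sigma+1,\omega_1]}(\alpha)$ (eventually~$1$) shows that $k^T_{\omega_1}$ is eventually constant. For $\overline{\mathscr{G}}_{c_0(\omega_1)}\subseteq\mathscr{M}$ I would apply Theorem~\ref{coorfreecharofLWideal}: if $T=VU$ factored through $c_0(\omega_1)$ and $I=STR$ held, then $I=(SV)(UR)$ would factor through~$c_0(\omega_1)$, making $C([0,\omega_1])$ a complemented subspace of the WCG space~$c_0(\omega_1)$ and hence WCG itself, contradicting the classical fact that $C([0,\omega_1])$ is not WCG. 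The lower strict inclusion then follows at once: Proposition~\ref{argyrosetalProp3.2} supplies a continuous projection~$P$ of~$C([0,\omega_1])$ onto the copy of~$c_0(\omega_1)$ given by $\cllin\{\mathbf{1}_{\{\alpha+1\}}:\alpha<\omega_1\}$, and this~$P$ belongs to $\mathscr{G}_{c_0(\omega_1)}$ but not to~$\mathscr{X}$ by Theorem~\ref{thmseprange}\alphref{thmseprange2a} because its range is non-separable.

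The upper strict inclusion $\mathscr{X}+\overline{\mathscr{G}}_{c_0(\omega_1)}\subsetneq\mathscr{M}$ is the main challenge. The key observation is that every element of the left-hand side has countable Szlenk index: writing $T=T_1+T_2$ with $T_1=\widetilde{P}_\sigma T_1\widetilde{P}_\sigma$, Theorem~\ref{thmseprange} lets us factor~$T_1$ through a separable $C([0,\sigma])\cong C(K_\beta)$, so that Theorem~\ref{SzlenkCK}\romanref{SzlenkCK1} gives $T_1\in\mathscr{S}\!\mathscr{Z}_{\beta+1}$, while Theorem~\ref{SzlenkCK}\romanref{SzlenkCK2} gives $T_2\in\mathscr{S}\!\mathscr{Z}_1\subseteq\mathscr{S}\!\mathscr{Z}_{\beta+1}$, whence $T\in\mathscr{S}\!\mathscr{Z}_{\beta+1}$ as this class is a closed ideal. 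It therefore suffices to display a single $T\in\mathscr{M}$ whose Szlenk index is uncountable. A natural candidate is $T=I-\Phi_\Xi$ for a carefully constructed transfinite sequence~$\Xi$: the matrix formula in Lemma~\ref{thePhiLemma}\romanref{matrixPhiXi} shows $(\Phi_\Xi)_{\alpha,\omega_1}=\delta_{\alpha,\omega_1}$, so $k^{I-\Phi_\Xi}_{\omega_1}\equiv 0$ and $I-\Phi_\Xi\in\mathscr{M}$; one then chooses~$\Xi$ so that, on a sequence of gaps $[\xi_{\sigma_\alpha}+1,\xi_{\sigma_\alpha+1}]$ of order type $\omega^{\omega^\alpha}$, the kernel $\ker\Phi_\Xi$ contains a complemented isomorphic copy of~$C(K_\alpha)$ for every countable~$\alpha$. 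This yields $\Sz(I-\Phi_\Xi)\ge\sup_\alpha\omega^{\alpha+1}=\omega_1$, placing $I-\Phi_\Xi$ outside all $\mathscr{S}\!\mathscr{Z}_\beta$ with countable~$\beta$. The delicate piece is this inductive choice of~$\Xi$, which is the main technical obstacle of the proof.

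For~(iii), the plan is first to identify $\mathscr{X}\cap\overline{\mathscr{G}}_{c_0(\omega_1)}=\overline{\mathscr{G}}_{c_0}$. Given $T=\widetilde{P}_\sigma T\widetilde{P}_\sigma$ approximated by compositions $VU$ with $V\colon c_0(\omega_1)\to C([0,\omega_1])$ and $U\colon C([0,\omega_1])\to c_0(\omega_1)$, Lemma~\ref{sepsubspaceofc0omega1} gives a countable~$\alpha$ with $U\widetilde{P}_\sigma=Q_\alpha U\widetilde{P}_\sigma$, so that $T\approx\widetilde{P}_\sigma VQ_\alpha U\widetilde{P}_\sigma$ factors through $Q_\alpha(c_0(\omega_1))\cong c_0$. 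Combined with the second isomorphism theorem this gives the algebraic identification $(\mathscr{X}+\overline{\mathscr{G}}_{c_0(\omega_1)})/\mathscr{X}\cong\overline{\mathscr{G}}_{c_0(\omega_1)}/\overline{\mathscr{G}}_{c_0}$; closedness of the sum ideal then reduces to showing that the quotient norm inherited from $\mathscr{B}(C([0,\omega_1]))/\mathscr{X}$ on the left-hand side agrees (up to equivalence) with the intrinsic quotient norm on the right, which I expect to follow from a direct norm-comparison argument using contractivity of the projections $\widetilde{P}_\sigma$ and~$Q_\alpha$ and that $\mathscr{X}$ is absorbing for the first factor. Overall, the hardest step is the construction of~$\Xi$ with $\Sz(\ker\Phi_\Xi)=\omega_1$ in step~(ii); everything else is bookkeeping combined with the already-established Theorems~\ref{thmseprange},~\ref{coorfreecharofLWideal}, and~\ref{SzlenkCK}.
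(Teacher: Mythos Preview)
Your overall plan is sound and the pieces you have written out are correct, but your approach differs from the paper's in two places, and in one of them your argument has a gap.

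\textbf{Closedness.} Your route via the second isomorphism theorem and a norm comparison can be completed, but you have not actually carried out the comparison; as stated, ``I expect to follow from\ldots'' is a gap. The missing estimate is this: given $S\in\overline{\mathscr{G}}_{c_0(\omega_1)}$ and $R\in\mathscr{X}$ with $R=\widetilde{P}_\sigma R\widetilde{P}_\sigma$, the element $R'=\widetilde{P}_\sigma S\widetilde{P}_\sigma$ lies in $\mathscr{X}\cap\overline{\mathscr{G}}_{c_0(\omega_1)}=\overline{\mathscr{G}}_{c_0}$ and satisfies $\|S-R'\|=\|(S-R)-\widetilde{P}_\sigma(S-R)\widetilde{P}_\sigma\|\le 2\|S-R\|$, which gives the bounded-below estimate you need. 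The paper, however, avoids all of this with a one-line direct argument: if $R_n+S_n\to T$ with $R_n\in\mathscr{X}$ and $S_n\in\overline{\mathscr{G}}_{c_0(\omega_1)}$, then $\lin\bigcup_n R_n(C([0,\omega_1]))$ is separable, so Lemma~\ref{lemma12sept} produces a \emph{single} countable~$\sigma$ with $\widetilde{P}_\sigma R_n=R_n$ for every~$n$; hence $(I-\widetilde{P}_\sigma)S_n\to(I-\widetilde{P}_\sigma)T$, placing $(I-\widetilde{P}_\sigma)T$ in~$\overline{\mathscr{G}}_{c_0(\omega_1)}$, and $T=\widetilde{P}_\sigma T+(I-\widetilde{P}_\sigma)T$ does the job. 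This is considerably simpler than the quotient-norm machinery.

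\textbf{The upper strict inclusion.} Your idea---show that every element of the sum lies in some $\mathscr{S}\!\mathscr{Z}_{\beta+1}$ with countable~$\beta$, then exhibit $T\in\mathscr{M}$ with uncountable Szlenk index---is correct in principle, and your candidate $I-\Phi_\Xi$ does work once the gaps $[\xi_\sigma+1,\xi_{\sigma+1}]$ are chosen of order type~$\omega^{\omega^\sigma}$ (one checks that the identity on $C([0,\omega^{\omega^\sigma}])$ factors through $I-\Phi_\Xi$ via restriction/extension on the gap, up to a rank-one perturbation). The paper takes a different and more concrete route: it forms the quotient $K=[0,\omega_1]/\!\!\sim$ obtained by collapsing the closed set $H=\{\omega^\lambda:\lambda\ \text{limit}\}$ to a point, builds an explicit bounded operator $U\colon C([0,\omega_1])\to C(K)$ by $U\mathbf{1}_{[0,\alpha]}=\mathbf{1}_{\pi(A_\alpha)}$ on indicators, and sets $V=C_\pi U$. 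Then $k^V_{\omega_1}=0$, so $V\in\mathscr{M}$; and if $V=R+S$ with $R\in\mathscr{X}$, $S\in\overline{\mathscr{G}}_{c_0(\omega_1)}$, the paper chooses $\tau$ with $\sigma\le\omega^{\omega^\tau}$ and exhibits a commutative diagram factoring the identity on $C([0,\omega^{\omega^\tau}])$ through $(I-\widetilde{P}_\sigma)V=(I-\widetilde{P}_\sigma)S\in\mathscr{S}\!\mathscr{Z}_1$, contradicting Theorem~\ref{SzlenkCK}. Both arguments ultimately rest on Szlenk-index obstructions, but the paper's avoids the global statement about the whole sum ideal and instead derives the contradiction from any putative decomposition of the single witness~$V$.

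Finally, note that your part~(i) is superfluous: once you show the sum ideal is properly contained in~$\mathscr{M}$ (equivalently, is a proper ideal), containment in~$\mathscr{M}$ is automatic from Theorem~\ref{uniquenessthm}.
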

\begin{proof}
  To show that the ideal $\mathscr{X}(C([0,\omega_1])) +
  \overline{\mathscr{G}}_{c_0(\omega_1)}(C([0,\omega_1]))$ is closed,
  let $T$ be an operator belonging to its closure, and take sequences
  $(R_n)_{n\in\mathbb{N}}$ in $\mathscr{X}(C([0,\omega_1]))$ and
  $(S_n)_{n\in\mathbb{N}}$ in
  $\overline{\mathscr{G}}_{c_0(\omega_1)}(C([0,\omega_1]))$ such that
  $R_n+S_n\to T$ as $n\to\omega$. Then $\lin\bigcup_{n\in\mathbb{N}}
  R_n(C([0,\omega_1]))$ is a separable subspace of~$C([0,\omega_1])$,
  so Lemma~\ref{lemma12sept} implies that it is contained in the range
  of~$\widetilde{P}_\sigma$ for some countable ordinal~$\sigma$. Hence
  we have $\widetilde{P}_\sigma R_n = R_n$ for each $n\in\mathbb{N}$,
  and therefore
  \[ (I - \widetilde{P}_\sigma)S_n = (I - \widetilde{P}_\sigma)(R_n +
  S_n)\to (I - \widetilde{P}_\sigma)T\qquad \text{as}\qquad
  n\to\omega, \] so $(I -
  \widetilde{P}_\sigma)T\in\overline{\mathscr{G}}_{c_0(\omega_1)}%
  (C([0,\omega_1]))$. Since $\widetilde{P}_\sigma$ has separable
  range, we conclude that \[ T = \widetilde{P}_\sigma T + (I -
  \widetilde{P}_\sigma)T\in \mathscr{X}(C([0,\omega_1])) +
  \overline{\mathscr{G}}_{c_0(\omega_1)}(C([0,\omega_1])), \] as
  required.

  We have $\mathscr{X}(C([0,\omega_1]))\subsetneq
  \mathscr{X}(C([0,\omega_1])) +
  \overline{\mathscr{G}}_{c_0(\omega_1)}(C([0,\omega_1]))$ because
  $C([0,\omega_1])$ contains a complemented subspace isomorphic
  to~$c_0(\omega_1)$, which is non-separable.

  The proof that $\mathscr{X}(C([0,\omega_1])) +
  \overline{\mathscr{G}}_{c_0(\omega_1)}(C([0,\omega_1]))$ is properly
  contained in the Loy--Willis ideal~$\mathscr{M}$ is somewhat more
  involved. By Theorem~\ref{uniquenessthm}, it suffices to display an
  operator belonging to the latter, but not the former ideal. We
  construct such an operator by considering an operator whose range is
  contained in a certain $C(K)$-subspace of~$C([0,\omega_1])$.

  Let $H = \{\omega^\lambda : \lambda\in[\omega,\omega_1]\ \text{is a
    limit ordinal}\}$; note that $\omega_1\in H$ because
  $\omega^{\omega_1} = \omega_1$. We define an equivalence
  rela\-tion~$\sim$ on~$[0,\omega_1]$ by
  \[ \alpha\sim \beta\qquad \Longleftrightarrow\qquad (\alpha =
  \beta\quad \text{or}\quad \alpha, \beta\in H)\qquad\quad
  (\alpha,\beta\in [0,\omega_1]). \] Denote by~$K$ the quotient space
  $[0,\omega_1]/\!\!\sim$ equipped with the quotient topo\-logy, and
  let $\pi\colon[0,\omega_1]\to K$ be the quotient map.  Then $K$ is
  compact (as the continuous image of a compact space), and the
  composition operator \mbox{$C_\pi\colon f\mapsto f\circ\pi$},
  \mbox{$C(K)\to C([0,\omega_1])$}, is a linear isometry.

  For each $\alpha\in[0,\omega_1]\setminus H$, either
  $\alpha\in[0,\omega^\omega)$ or
  $\alpha\in(\omega^\lambda,\omega^{\lambda+\omega})$ for some limit
  ordinal $\lambda\in[\omega,\omega_1)$. In the first case, let
  $A_\alpha = [0,\alpha]$, in the second, let $A_\alpha =
  (\omega^\lambda,\alpha]$. Then $A_\alpha$ is clopen
  in~$[0,\omega_1]$ and disjoint from~$H$, so $\pi(A_\alpha)$ is
  clopen in~$K$.
  
  This implies that $K$ is Hausdorff. Indeed, given two distinct
  points $\pi(\alpha),\pi(\beta)\in K$, we may suppose that
  $\alpha\notin H$ and $\alpha<\beta$. Then $\pi(A_\alpha)$ and
  $K\setminus\pi(A_\alpha)$ are disjoint open neigh\-bour\-hoods
  of~$\pi(\alpha)$ and~$\pi(\beta)$, respectively.

  Moreover, we can define a linear map $U\colon
  \lin\{\mathbf{1}_{[0,\alpha]}:\alpha\in[0,\omega_1]\}\to C(K)$ by
  \begin{equation}\label{defnU} U\mathbf{1}_{[0,\alpha]}
    = \begin{cases} \mathbf{1}_{\pi(A_\alpha)}
      &\text{for}\ \alpha\in[0,\omega_1]\setminus H\\ 
      0 &\text{for}\ \alpha\in H
    \end{cases} \end{equation} because $\pi(A_\alpha)$ being clopen
  ensures that the indicator function~$\mathbf{1}_{\pi(A_\alpha)}$ is
  continuous.  To prove that $U$ is bounded, we consider the action
  of~$U$ on a function of the form $f = \sum_{j=1}^n
  c_j\mathbf{1}_{[0,\alpha_j]}$, where $n\in\mathbb{N}$,
  $c_1,\ldots,c_n\in\mathbb{K}$ and
  $0\le\alpha_1<\alpha_2<\cdots<\alpha_n\le\omega_1$. We have $\|f\| =
  \max\bigl\{|\sum_{j=m}^n c_j| : 1\le m\le n\bigr\}$, while for
  $\beta\in[0,\omega_1]$,
  \[ (Uf)(\pi(\beta)) = \sum_{j\in J}
  c_j\mathbf{1}_{\pi(A_{\alpha_j})}(\pi(\beta)) = \sum_{j\in J}
  c_j\mathbf{1}_{A_{\alpha_j}}(\beta), \] where $J = \{
  j\in\{1,\ldots,n\} :\alpha_j\notin H\}$ and the second equality
  follows because $A_{\alpha_j}$ is disjoint from~$H$ for each $j\in
  J$. Thus $(Uf)(\pi(\beta)) = 0$ if $\beta\notin \bigcup_{j\in J}
  A_{\alpha_j}$. Now suppose that $\beta\in A_{\alpha_j}$ for some
  $j\in J$.  If $\beta\in[0,\omega^\omega)$, we let $\lambda = 0$, and
  otherwise we choose a limit ordi\-nal~$\lambda\in[\omega,\omega_1)$
  such that $\beta\in(\omega^\lambda,\omega^{\lambda+\omega})$. Then,
  letting \[ k = \min\{ j\in\{1,\ldots,n\} :\beta\le\alpha_j\}\qquad
  \text{and}\qquad m = \max\{ j\in\{1,\ldots,n\} :\alpha_j <
  \omega^{\lambda+\omega}\}, \] we have $\beta\in A_{\alpha_j}$ if and
  only if $k\le j\le m$, so
  \[ (Uf)(\pi(\beta)) = \sum_{j=k}^m c_j = \sum_{j=k}^n c_j -
  \sum_{j=m+1}^n c_j, \] and consequently $|(Uf)(\pi(\beta))|\le
  \bigl|\sum_{j=k}^n c_j\bigr| + \bigl|\sum_{j=m+1}^n c_j\bigr| \le
  2\|f\|$. This proves that $U$ is bounded with norm at most two. (In
  fact $\|U\| = 2$ because $f = -2\mathbf{1}_{\{0\}} +
  \mathbf{1}_{[0,\omega^\omega]}\in C([0,\omega_1])$ has norm one, so
  $\| U\|\ge \|Uf\| = 2\|\mathbf{1}_{\{\pi(0)\}}\| = 2$.)

  Since the subspace
  \mbox{$\lin\{\mathbf{1}_{[0,\alpha]}:\alpha\in[0,\omega_1]\}$} is
  dense in~$C([0,\omega_1])$, $U$ extends uniquely to an operator of
  norm two defined on~$C([0,\omega_1])$.  We now claim that the
  operator $V= C_\pi U$ belongs to~$\mathscr{M}\setminus%
  (\mathscr{X}(C([0,\omega_1])) +
  \overline{\mathscr{G}}_{c_0(\omega_1)}(C([0,\omega_1])))$. Once
  verified, this claim will complete the proof.

  We have $V\in\mathscr{M}$ because $k^V_{\omega_1} = 0$. Indeed,
  given $\alpha\in[0,\omega_1]$, we shall prove that
  $V_{\alpha,\omega_1} = 0$ by direct computation. Since $r^V_\alpha$
  has countable support, we can choose a non-zero countable limit
  ordinal $\lambda$ such that $V_{\alpha,\beta} = 0$ for each
  $\beta\in(\omega^\lambda,\omega_1)$. Then
  \[ V_{\alpha,\omega_1} = \sum_{\beta\in(\omega^\lambda,\omega_1]}
  V_{\alpha,\beta} = (V\mathbf{1}_{(\omega^\lambda,\omega_1]})(\alpha)
  = C_\pi(U\mathbf{1}_{[0,\omega_1]} -
  U\mathbf{1}_{[0,\omega^\lambda]})(\alpha) = 0, \] where the final
  equality follows from~\eqref{defnU} because $\omega_1$ and
  $\omega^\lambda$ both belong to~$H$.

  To show that $V\notin\mathscr{X}(C([0,\omega_1])) +
  \overline{\mathscr{G}}_{c_0(\omega_1)}(C([0,\omega_1]))$, assume the
  contrary, say $V = R+S$, where
  $R\in\mathscr{X}(C([0,\omega_1]))$ and
  $S\in\overline{\mathscr{G}}_{c_0(\omega_1)}(C([0,\omega_1]))$.  By
  Theorem~\ref{thmseprange}, we can choose a countable
  ordinal~$\sigma$ such that $R = \widetilde{P}_\sigma
  R\widetilde{P}_\sigma$, and thus
  \begin{equation}\label{eq1XplusGandM}
    (I - \widetilde{P}_\sigma)V = (I - \widetilde{P}_\sigma)S%
    \in\overline{\mathscr{G}}_{c_0(\omega_1)}(C([0,\omega_1])).
  \end{equation} Take a non-zero countable ordinal~$\tau$
  such that   $\sigma\le\omega^{\omega^\tau}$, and let   $\lambda =
  \omega^\tau$. Further, let $\iota\colon 
  (\omega^\lambda, \omega^\lambda\cdot 2]\to [0,\omega_1]$
  be the inclusion map, and define \mbox{$\rho\colon
    [0,\omega_1]\to (\omega^\lambda, \omega^\lambda\cdot 2]$} by
  \[ \rho(\alpha) = \begin{cases} \omega^\lambda +1 &\text{for}\ \
    \alpha\in [0,\omega^\lambda]\\ \alpha &\text{for}\ \
    \alpha\in(\omega^\lambda,\omega^\lambda\cdot 2)\\
    \omega^\lambda\cdot 2 &\text{for}\ \ \alpha\in[\omega^\lambda\cdot
    2,\omega_1]. \end{cases} \] Clearly $\rho$ is continuous, and we
  claim that the diagram
  \begin{equation}\label{eq2XplusGandM}
    \spreaddiagramrows{6ex}\spreaddiagramcolumns{1.75ex}%
    \xymatrix{ C((\omega^\lambda,
      \omega^\lambda\cdot 2])\ar[rrr]^{\displaystyle{I}}
      \ar[d]_{\displaystyle{C_\rho}} & & &
      C((\omega^\lambda, \omega^\lambda\cdot 2])\\
      C([0,\omega_1])\ar[r]^{\displaystyle{P_{\omega^\lambda\cdot 2}}} &
      C([0,\omega_1])\ar[r]^{\displaystyle{V}} &
      C([0,\omega_1])\ar[r]^{\displaystyle{I-\widetilde{P}_\sigma}} &
      C([0,\omega_1])\ar[u]_{\displaystyle{C_\iota}}} \end{equation}
  is commutative, where  $C_\rho\colon f\mapsto f\circ\rho$
  and $C_\iota\colon f\mapsto f\circ\iota$ denote the composition
  operators associated with~$\rho$  and~$\iota$, respectively. To
  verify this claim, it suffices to check the action on each function
  of the 
  form   $\mathbf{1}_{(\omega^\lambda,\alpha]}$, where $\alpha\in 
  (\omega^\lambda, \omega^\lambda\cdot 2]$, because such functions
  span a dense subspace of \mbox{$C((\omega^\lambda,
    \omega^\lambda\cdot 2])$}. We have $C_\rho
  \mathbf{1}_{(\omega^\lambda,\alpha]} = \mathbf{1}_{[0,\alpha]}$ for
  $\alpha\in(\omega^\lambda,\omega^\lambda\cdot 2)$ and $C_\rho
  \mathbf{1}_{(\omega^\lambda,\omega^\lambda\cdot 2]} =
  \mathbf{1}_{[0,\omega_1]}$, so $P_{\omega^\lambda\cdot 2}C_\rho
  \mathbf{1}_{(\omega^\lambda,\alpha]} = \mathbf{1}_{[0,\alpha]}$
  for each $\alpha\in(\omega^\lambda,\omega^\lambda\cdot
  2]$. Hence, by~\eqref{defnU},
  \[ C_\iota(I-\widetilde{P}_\sigma)VP_{\omega^\lambda\cdot 2}C_\rho
  \mathbf{1}_{(\omega^\lambda,\alpha]} =
  C_\iota(I-\widetilde{P}_\sigma)C_\pi \mathbf{1}_{\pi(A_\alpha)} =
  C_\iota(I-\widetilde{P}_\sigma) \mathbf{1}_{A_\alpha} =
  \mathbf{1}_{A_\alpha}, \] which proves the claim because $A_\alpha =
  (\omega^\lambda,\alpha]$.

  The map $\alpha\mapsto \omega^\lambda+1+\alpha,\,
  [0,\omega^\lambda]\to(\omega^\lambda, \omega^\lambda\cdot 2]$, is a
  homeo\-mor\-phism, so the Banach spaces $C([0,\omega^\lambda])$ and
  $C((\omega^\lambda,\omega^\lambda\cdot 2])$ are isometrically
  isomorphic.  Hence $C((\omega^\lambda, \omega^\lambda\cdot 2])$ has
  Szlenk index $\omega^{\tau+1}$ by
  Theorem~\ref{SzlenkCK}\romanref{SzlenkCK1}.

  On the other hand, Theorem~\ref{SzlenkCK}\romanref{SzlenkCK2}
  implies that $\overline{\mathscr{G}}_{c_0(\omega_1)}\subseteq
  \mathscr{S}\!\mathscr{Z}_1$, so
  by~\eqref{eq1XplusGandM}--\eqref{eq2XplusGandM} (the identity
  operator on) $C((\omega^\lambda, \omega^\lambda\cdot 2])$ has Szlenk
  index at most~$\omega$, contradicting the conclusion of the previous
  paragraph.
\end{proof}

\begin{remark}
  We shall here outline an alternative, more abstract, approach to
  part of the proof of Theorem~\ref{XplusGandM} given above as it
  sheds further light on a construction therein and raises an
  interesting question at the end.  Our starting point is the
  observation that the compact Hausdorff space~$K$ defined in the
  proof of Theorem~\ref{XplusGandM} is in fact just a convenient
  realization of the one-point compactification of the dis\-joint
  union of the intervals~$[0,\omega^{\omega^\alpha}]$
  for~$\alpha\in[0,\omega_1)$.

  A space is \emph{Eberlein compact} if it is homeomorphic to a weakly
  compact subset of~$c_0(\Gamma)$ for some index set~$\Gamma$.  Being
  compact metric spaces, the inter\-vals~$[0,\omega^{\omega^\alpha}]$
  are Eberlein compact whenever~$\alpha$ is countable. Therefore, by a
  result of Lindenstrauss~\cite[Proposition~3.1]{lin}, the one-point
  compactification of their disjoint union is Eberlein compact; that
  is, our space~$K$ is Eberlein compact. On the other hand, the
  interval~$[0,\omega_1]$ is not Eberlein compact.

  A Banach space~$X$ is \emph{weakly compactly generated} if it
  contains a weakly compact subset~$W$ such that $X = \cllin W$. Amir
  and Lindenstrauss~\cite{al} have shown that a compact space~$L$ is
  Eberlein compact if and only if the Banach space~$C(L)$ is weakly
  compactly generated. Hence, returning to our case, we see that
  $C(K)$ is weakly compactly generated, whereas $C([0,\omega_1])$ is
  not. This implies that the closed
  ideal~$\overline{\mathscr{G}}_{C(K)}(C([0,\omega_1]))$ is proper and
  thus contained in the Loy--Willis ideal~$\mathscr{M}$. By
  definition, the operator~$V$ defined in the proof of
  Theorem~\ref{XplusGandM} factors through $C(K)$. On the other hand,
  we showed there that it does not belong to
  $\mathscr{X}(C([0,\omega_1])) +
  \overline{\mathscr{G}}_{c_0(\omega_1)}(C([0,\omega_1]))$, so this
  ideal is distinct
  from~$\overline{\mathscr{G}}_{C(K)}(C([0,\omega_1]))$.

  To prove that $\mathscr{X}(C([0,\omega_1])) +
  \overline{\mathscr{G}}_{c_0(\omega_1)}(C([0,\omega_1]))$ is
  contained in $\overline{\mathscr{G}}_{C(K)}(C([0,\omega_1]))$,
  consider first an ordinal $\lambda$ of the form~$\omega^\tau$,
  where~$\tau\in[1,\omega_1)$.  Replacing $\widetilde{P}_\sigma$
  with~$0$ in~\eqref{eq2XplusGandM}, we obtain a commutative diagram
  as before, and since $V$ factors through $C(K)$ and
  $C([0,\omega^\lambda])\cong C((\omega^\lambda,\omega^\lambda\cdot
  2])$, we conclude that the identity operator on
  $C([0,\omega^\lambda])$ factors through~$C(K)$.  Hence $C(K)$
  contains a complemented copy of $C([0,\omega^\lambda])$, and
  therefore we have \mbox{$\mathscr{X}(C([0,\omega_1]))\subseteq
    \mathscr{G}_{C(K)}(C([0,\omega_1]))$} by
  Theorem~\ref{thmseprange}.  Secondly, Lemma~\ref{lemma28june1}
  implies that $(\mathbf{1}_{\{\pi(\omega^\lambda+1)\}})$, where
  $\lambda$ ranges over all non-zero countable limit ordinals, is a
  transfinite basic sequence in~$C(K)$ equivalent to the canonical
  Schauder basis for~$c_0(\omega_1)$.
  Proposition~\ref{argyrosetalProp3.2} ensures that the closed linear
  span of this sequence is complemented in $C([0,\omega_1])$ and hence
  also in the subspace~$C(K)$, so
  $\mathscr{G}_{c_0(\omega_1)}\subseteq \mathscr{G}_{C(K)}$.  

  Thus, to summarize, we have shown that
  \[ \mathscr{X}(C([0,\omega_1])) +
  \overline{\mathscr{G}}_{c_0(\omega_1)}(C([0,\omega_1]))\subsetneq
  \overline{\mathscr{G}}_{C(K)}(C([0,\omega_1]))\subseteq\mathscr{M}. \]
  We do not know whether the final inclusion is proper; we conjecture
  that it is.

  Another interesting question is whether the inclusion
  \[ \overline{\mathscr{G}}_{C(K_{\alpha})\oplus
    c_0(\omega_1)}(C([0,\omega_1]))\subseteq
  \mathscr{S}\!\mathscr{Z}_{\alpha+1}(C([0,\omega_1])), \] established
  in the proof of Proposition~\ref{propinfinitechains}, is proper for
  some, or each, countable ordinal~$\alpha$.
\end{remark}

\bibliographystyle{amsplain}

\end{document}